\newtheorem{thm}{Theorem}[section]
\newtheorem{defn}[thm]{Definition}
\newtheorem{rmk}[thm]{Remark}
\newtheorem{prop}[thm]{Proposition}
\newtheorem{lem}[thm]{Lemma}
\newtheorem{cor}[thm]{Corollary}
\DeclarePairedDelimiterX\intff[2]{[}{]}{#1,#2}
\DeclarePairedDelimiterX\intfo[2]{[}{)}{#1,#2}
\DeclarePairedDelimiterX\intof[2]{(}{]}{#1,#2}
\DeclarePairedDelimiterX\intoo[2]{(}{)}{#1,#2}
\DeclarePairedDelimiter{\pars}{(}{)}
\DeclarePairedDelimiter{\absolute}{|}{|}
\DeclarePairedDelimiter{\bracks}{[}{]}
\DeclarePairedDelimiter{\braces}{\lbrace}{\rbrace}
\DeclarePairedDelimiterX{\setof}[2]{\lbrace}{\rbrace}{#1\,{:}\,#2}
\DeclarePairedDelimiterX{\bracksof}[2]{[}{]}{#1\,\delimsize\vert\,#2}
\DeclarePairedDelimiterX{\parsof}[2]{(}{)}{#1\,\delimsize\vert\,#2}
\DeclarePairedDelimiterXPP\lnorm[2]{}\lVert\rVert{_{#1}}{#2}
\newcommand\gnP{\mathbf P}
\newcommand\gnE{\mathbf E}
\newcommand\gnVar{\mathtt{Var}}
\newcommand\gnRV{X}
\newcommand\gnT{{T}}  
\newcommand\gnNd{u}              
\newcommand\gnNdd{v}
\newcommand\gnCh[1]{k_{#1}}      
\newcommand\gnZ[1]{Z_{#1}}       
\newcommand\gnH{H}               
\newcommand\gnTrees{\mathcal T}  
\newcommand\gnsT{T}
\newcommand\gnV[1]{V_{#1}}
\newcommand\gnX[1]{X_{#1}}
\newcommand\subtree[2]{#1\bracks{#2}}     
\newcommand\ancestor[1]{\phi_{#1}}        
\newcommand\prune[1]{\mathtt{pru}_{#1}}   
\newcommand\cut[1]{\mathtt{cut}_{#1}}
\newcommand\gwCh{\mu}                     
\newcommand\mean{m} 
\newcommand\gwP{\mathbf P}
\newcommand\gwE{\mathbf E}
\newcommand\gwTr[2]{P_{#1}\pars*{#2}}     
\newcommand\gwf{f}                        
\newcommand\gwq{q}                        
\newcommand\gwgamma{\gamma}               
\newcommand\pruTrees[1]{\mathcal T_{#1}}  
\newcommand\pruP[1]{\mathbf P^{pru}_{#1}}
\newcommand\stP[1]{\mathbf P^{st}_{#1}}
\newcommand\brwD{\theta}
\newcommand\bgwP{\mathbf P}
\newcommand\bstP[1]{\mathbf P^{st}_{#1}}
\newcommand\bstE[1]{\mathbf E^{st}_{#1}}
\newcommand\range[1]{R_{#1}}
\newcommand\rangeS[1]{S_{#1}}
\newcommand\rangeG[1]{G_{#1}}
\newcommand\rangeR{F}
\newcommand\gap[2]{g_{#1}^{#2}}
\author[]{Tianyi Bai, Pierre Rousselin}
\thanks{\textsc{Laboratoire de G\'eom\'etrie, Analyse et Applications, Universit\'e Sorbonne Paris Nord, CNRS UMR 7539, Villetaneuse, France.}}
\thanks{\emph{E-mail:} \texttt{bai@math.univ-paris13.fr}, \texttt{rousselin@math.univ-paris13.fr}}
\title[]{Branching random walks conditioned on rarely survival}
\date{}
\begin{document}
\maketitle
\begin{abstract}In this paper, we show that a Galton-Watson tree conditioned to have a fixed number of particles in generation $n$ converges in distribution as $n\rightarrow\infty$, and with this tool we study the span and gap statistics of a branching random walk on such trees, which is the discrete version of Ramola, Majumdar and Schehr \cite{phycon2}, generalized to arbitrary offspring and displacement distributions with moment constraints.
\end{abstract}
\bibliographystyle{plain}

\section{Introduction}
Consider a Galton-Watson tree $\gnT$ with offspring distribution $\gwCh$ and regularity conditions
\begin{equation}\label{eq:assumption_gw}
\begin{gathered}
\gwCh(0),\gwCh(1)>0,\,\gwCh(0)+\gwCh(1)<1,\\
\mean:=\sum_{k=1}^\infty k\gwCh(k)\in(0,\infty),\,\sum_{k=1}^\infty k^2\gwCh(k)<\infty.
\end{gathered}
\end{equation}
We denote by $\gnZ{n}$ the population of generation $n$, 
\begin{figure}[ht]
\centering
\includegraphics[scale=0.6]{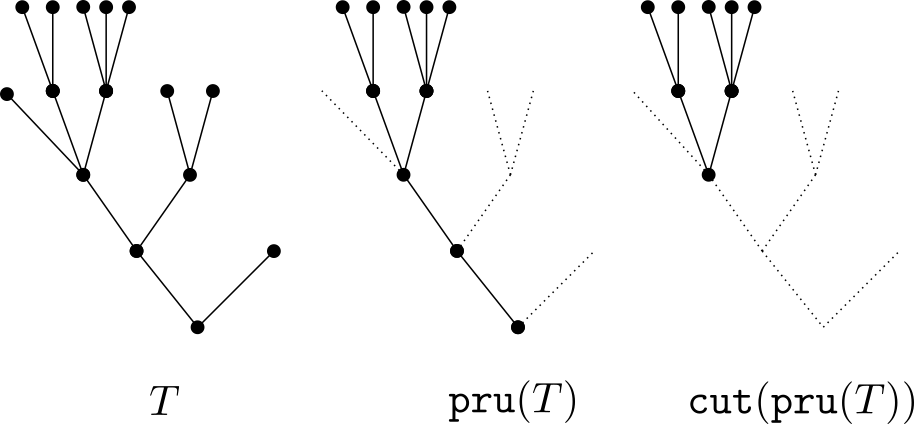}
\caption{Essential structure $\cut{n}(\prune{n}(\gnT))$ for $n=4,k=5$.}
\label{fig1}
\end{figure}
and by $\cut{n}(\prune{n}(\gnT))$ the reduced tree formed by the family tree of nodes in generation $n$ up to their youngest common ancestor, as illustrated in \Cref{fig1}. Then our first result is that
\begin{thm}\label{thm:gw}
Fix $k\ge 1$.
Under \eqref{eq:assumption_gw}, one can construct an explicit probability measure $\stP{k}$ (see \Cref{pp:st}) such that, as $n\rightarrow\infty$, for any set $B$ of finite trees,
\[
\gwP\parsof*{\cut{n}(\prune{n}(\gnT))\in B}{\gnZ{n}=k}\rightarrow\stP{k}(B).
\]
\end{thm}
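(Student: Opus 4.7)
The plan is to derive, for each finite tree $\tau$ with $k$ leaves at a common depth $h$, an exact product formula for $\gwP(\cut{n}(\prune{n}(\gnT)) = \tau,\, \gnZ{n} = k)$, to divide through by $\gwP(\gnZ{n}=k)$, and to observe that the $n$-dependence cancels in the limit. Once pointwise convergence of the mass function on individual finite trees is in hand, Scheff\'e's lemma (together with a direct check that the limit sums to one) upgrades it to total-variation convergence, which in particular gives convergence on every set $B$ of finite trees.

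First I would write down the explicit formula. Set $g(s) = \sum_l \gwCh(l) s^l$ for the offspring p.g.f.\ and $\gwq_m = \gwP(\gnZ{m}=0)$, and decompose the GW tree into (i) the ``stem'' of $n-h$ generations from the root down to the YCA, each of whose nodes must have exactly one child with descendants in generation $n$, and (ii) the subtree rooted at the YCA with reduced shape $\tau$. A standard application of the branching property, summing over the true offspring count at each visible node, yields
\begin{equation*}
\gwP\pars*{\cut{n}(\prune{n}(\gnT)) = \tau,\, \gnZ{n} = k}
= \pars*{\prod_{m=h}^{n-1} g'(\gwq_m)}\, F(\tau),
\end{equation*}
where $F(\tau)$ is a finite product over the internal nodes $v$ of $\tau$ of factors of the form $\tfrac{1}{c(v)!}\,g^{(c(v))}(\gwq_{\ell(v)-1})$, with $c(v)$ the number of $\tau$-children of $v$ and $\ell(v)$ the number of generations separating $v$ from the leaves of $\tau$. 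The $g'(\gwq_m)$ at each stem node comes from the exchangeable choice of which sibling carries the continuation, and each $g^{(c(v))}$ factor accounts for the $c(v)$ surviving children of $v$ against the remaining dead subtrees; leaves contribute trivially.

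The next step is the asymptotic analysis. Under \eqref{eq:assumption_gw}, for each fixed $k$, classical branching-process theory (Athreya--Ney in the supercritical case; Kolmogorov--Yaglom in the critical case; Yaglom in the subcritical case) supplies $\gwP(\gnZ{n} = k) = c_k\, \rho^n\, n^{-\alpha}(1+o(1))$ with $\rho = g'(\gwq)$ and $\alpha \in \{0,2\}$ depending on the regime (where $\gwq$ is the extinction probability), while the second-moment hypothesis yields $\gwq - \gwq_m = O(\rho^m)$ when $\mean \neq 1$ and $1-\gwq_m \sim 2/(\sigma^2 m)$ in the critical case. These estimates make the infinite product $B_h := \prod_{m \ge h} g'(\gwq_m)/g'(\gwq)$ convergent and give the matching asymptotic $\prod_{m=h}^{n-1} g'(\gwq_m) = B_h\, g'(\gwq)^{-h}\,\rho^n\, n^{-\alpha}(1+o(1))$; taking the ratio produces the pointwise limit $\stP{k}(\tau) = B_h\, g'(\gwq)^{-h}\, F(\tau)/c_k$, which I expect to match the explicit measure constructed in \Cref{pp:st}. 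The main obstacle will be the critical regime, where the polynomial factor $n^{-2}$ must cancel \emph{exactly} between the stem product and $\gwP(\gnZ{n}=k)$ and therefore demands sharp Kolmogorov--Yaglom-type estimates rather than merely the rate of $\gwq_m \to 1$; once this cancellation is in place, verifying $\sum_\tau \stP{k}(\tau) = 1$ (either by a generating-function identity or by Fatou, using that each prelimit has total mass one) closes the proof.
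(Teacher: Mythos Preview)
Your decomposition into stem plus YCA-subtree, and the resulting product formula, coincide with the paper's. The difference lies in how you pass to the limit. You propose to treat the stem product $\prod_{m=h}^{n-1} g'(\gwq_m)$ and the normalizer $\gwP(\gnZ{n}=k)$ separately, finding sharp asymptotics $\sim C\rho^n n^{-\alpha}$ for each (with case analysis according to $\mean$) and then cancelling. The paper instead observes that $g'(\gwq_m)=\gwf'_{m+1}(0)/\gwf'_m(0)=\gwTr{m+1}{1,1}/\gwTr{m}{1,1}$, so the stem product \emph{telescopes} to $\gwTr{n}{1,1}/\gwTr{h}{1,1}$ (this is \Cref{cor:pruHeight}); after dividing by $\gwTr{n}{1,k}$ the only $n$-dependence left is the single ratio $\gwTr{n}{1,1}/\gwTr{n}{1,k}$, whose monotone convergence to $1/\pi_k$ is exactly Part~1 of \Cref{pp:ratio} and needs no case distinction. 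The total-mass check is then automatic: taking $A$ to be the whole space in \Cref{lem:st} yields $\sum_{u\le n}(\cdots)=\gwTr{n}{1,k}/\gwTr{n}{1,1}\nearrow\pi_k$, so Scheff\'e is not needed either.

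Your route can be pushed through under \eqref{eq:assumption_gw}, but the caveat you partly anticipate is real: in the critical case $g'(\gwq)=1$ and $1-g'(\gwq_m)\asymp 1/m$, so the infinite product $B_h=\prod_{m\ge h} g'(\gwq_m)/g'(\gwq)$ that you write down diverges to $0$ rather than converging; you would have to replace it by $\lim_n n^{2}\prod_{m=h}^{n-1} g'(\gwq_m)$ and invoke the sharp $n^{2}\gwTr{n}{1,1}\to c$ estimate (Part~3 of \Cref{pp:ratio}) on both sides. The telescoping collapses all of this to a one-line argument valid uniformly in $\mean$.
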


Moreover, consider the branching random walk $(\gnV{\gnNd})_{\gnNd\in\gnT}$ indexed by a Galton-Watson tree: 
\[\gnV{\gnNd}=\sum_{\varnothing<\gnNdd\le\gnNd}\gnX{\gnNdd},\,\gnV{\varnothing}=0,\]
where $\gnX{\gnNdd}\overset{\text{i.i.d.}}{\sim}\brwD$ for all $\gnNdd\in\gnT\backslash\{\varnothing\}$, and $\brwD$ is a distribution on $\mathbb R$ with regularity conditions 
\begin{equation}\label{eq:assumption_brw}
\begin{aligned}
\gnE[\gnRV]=0,\,\gnVar(\gnRV)=1,\,
\gnE[\exp(t\gnRV)]<\infty,\forall t\in\mathbb R\text{, where }X\sim\brwD.
\end{aligned}
\end{equation}

\begin{figure}[ht]
\centering
\includegraphics[scale=0.6]{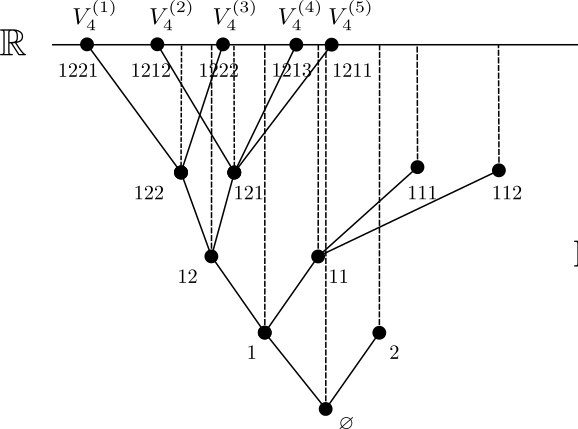}
\caption{Spatial positions $(\gnV{n}^{(i)})$ illustrated with $n=4,k=5$,}
\label{fig2}
\end{figure}

We list the positions of nodes in generation $n$ in increasing order,
\[\gnV{n}^{(1)}\le\cdots\le\gnV{n}^{(\gnZ{n})},\]
as showed in \Cref{fig2},
then we study its {\bf span}
\[
\range{n}:=\gnV{n}^{(\gnZ{n})}-\gnV{n}^{(1)}
\]
and its successive {\bf gaps}
\[
\gap{n}{i}:=\gnV{n}^{(i+1)}-\gnV{n}^{(i)},\,1\le i\le \gnZ{n}-1.
\]
For simplicity, we write $\range{}$ and $(\gap{}{i})$ for the span and gaps of the last generation for a finite tree.
Recall that $\mean=\sum_{k=1}^\infty k\gwCh(k)$ is the expected number of children, and our second result is then
\begin{thm}\label{thm:brw}
Let $k\ge 2$ and $1\le i\le k-1$. If \eqref{eq:assumption_gw} and \eqref{eq:assumption_brw} are satisfied, then the law of
$\range{n}$ and $(\gap{n}{i})$ under $\gwP\parsof{\cdot}{\gnZ{n}=k}$ converges in distribution to that of $\range{}$ and $(\gap{}{i})$ under $\stP{k}(\cdot)$, and there are explicit constants $C_1,C_2,C_3$ (see \Cref{lm:rangeS}, \Cref{pp:gap}) such that, as $x\rightarrow\infty$,
\[
\stP{k}\pars*{\range{}>x}=
\begin{cases}
(C_1+o(1))x^{-2}, &\mean=1,\\
\exp(-(C_2+o(1))x),&\mean\ne 1,
\end{cases}
\]
\[
\stP{k}\pars*{\gap{}{i}>x}=
\begin{cases}
(C_1C_3+o(1))x^{-2}, &\mean=1,\\
\exp(-(C_2+o(1))x),&\mean\ne 1.
\end{cases}
\]
\end{thm}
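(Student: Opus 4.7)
For \emph{convergence in distribution}, observe that $\range{n}$ and $(\gap{n}{i})$ are continuous functions of the ordered positions $(\gnV{n}^{(i)})_{1\le i\le\gnZ{n}}$, and each $\gnV{n}^{(i)}$ is a sum of i.i.d.\ displacements along a root-to-leaf path lying entirely inside $\cut{n}(\prune{n}(\gnT))$, since pruned subtrees contain no generation-$n$ vertex. Thus, conditionally on the essential structure, the vector of leaf positions is a measurable function of the edge lengths of $\cut{n}(\prune{n}(\gnT))$ together with independent blocks of displacements, one block of length $\ell$ per edge. \Cref{thm:gw} gives the convergence in distribution of $\cut{n}(\prune{n}(\gnT))$ to its limit under $\stP{k}$; coupled with the independent displacement blocks, the leaf position vector converges jointly, and the continuous mapping theorem applied to the span and gap functionals yields the first claim.

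For the \emph{tail asymptotics} under $\stP{k}$, I would use the explicit construction to decompose each limiting leaf position as $V^{(i)}=\sum_{e\in\mathrm{path}(i)} S_e$, where $S_e=\sum_{j=1}^{L_e}X_j^{(e)}$ is an independent random walk of random length $L_e$ equal to the length of the skeleton edge $e$. Then $\range{}=\max_i V^{(i)}-\min_i V^{(i)}$ is a function of finitely many independent sums $S_e$ arranged along the limit tree. In the non-critical regime $\mean\ne 1$, the edges of the limit skeleton should have geometric-type length tails; combined with the finite moment generating function in \eqref{eq:assumption_brw}, a Chernoff bound (with a matching lower bound obtained by concentrating on the edge with smallest Legendre rate) gives $\stP{k}(\range{}>x)=\exp(-(C_2+o(1))x)$. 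In the critical regime $\mean=1$, the edge lengths should have polynomial tail $\stP{k}(L_e>t)\sim c/t$, so a saddle-point evaluation
\[
\stP{k}(\abs{S_e}>x)\sim\int\overline{\Phi}\pars*{x/\sqrt{\ell}}\,\stP{k}(L_e\in d\ell),
\]
concentrated around $\ell\asymp x^2$, yields an $x^{-2}$ tail. This propagates to $\range{}$, and then to each $\gap{}{i}$ with an additional combinatorial factor $C_3$ reflecting the probability that the dominant edge realises the $i$-th order gap rather than another.

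The main obstacle will be the explicit identification of the constants $C_1,C_2,C_3$: this demands precise knowledge of the edge-length distributions under $\stP{k}$ (from the explicit construction available later in the paper), a careful saddle-point/Chernoff calculation including prefactors, and bookkeeping of which pair of branches realises the extreme positions for $\range{}$ and which specific order-statistic gap is realised by the dominant edge for $\gap{}{i}$. The convergence step is, by comparison, essentially routine once \Cref{thm:gw} is in hand.
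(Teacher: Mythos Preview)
Your convergence argument is correct and matches the paper's: equation~\eqref{eq:operation_nature} says $\range{n}$ and $(\gap{n}{i})$ depend only on $\cut{n}(\prune{n}(\gnT))$, and \Cref{thm:gw} plus the independent displacements give convergence under $\stP{k}$ (this is \eqref{eq:PGWtoPST}).

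For the tails, your skeleton-edge picture is the right intuition, but the paper's argument is organised around a sharper decomposition that you do not isolate. Under $\stP{k}$ the root already has $\gnZ{1}\ge 2$, and the paper splits $\range{}$ as $\rangeS{}\le\range{}\le\rangeS{}+2\rangeG{}$, where $\rangeS{}$ is the span of one designated leaf per root-subtree and $\rangeG{}$ is the maximal span inside a subtree. Two structural facts then do all the work: (i) conditionally on $\gnH(\gnT)=u$, one has $\gnZ{1}=2$ with probability $1-O(\gwTr{u}{1,1})$ (\Cref{cor:branching_st}), so $\rangeS{}$ is essentially the gap between two independent length-$u$ walks; (ii) $\stP{k}(\gnH(\gnT)=u)\asymp\gwTr{u}{1,1}$ (\Cref{cor:st_height}), which is $\asymp u^{-2}$ if $\mean=1$ and $\asymp\gwgamma^{u}$ otherwise. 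The tail of $\rangeS{}$ is then the single sum $\sum_{u}\stP{k}(\gnH=u)\,\rangeR(2,u,x)$, evaluated by CLT/Berry--Esseen in the critical case and Cram\'er in the non-critical case. The correction $\rangeG{}$ is shown negligible because the \emph{second} branching height $\tilde\gnH$ carries an extra factor $\gwTr{u}{1,1}$.

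This last point is where your sketch is thinnest: you treat all skeleton edges as if they had comparable length tails, but they do not --- the two root-edges dominate, and the deeper edges are strictly lighter (the extra $\gwTr{u}{1,1}$). Without that hierarchy you can get the rate but not the constant $C_1$, and you cannot cleanly justify that ``the dominant edge'' is well-defined. Your interpretation of $C_3$ is correct: it is exactly $\pi_i\pi_{k-i}/\sum_j\pi_j\pi_{k-j}$, the limiting probability that the two root-subtrees have sizes $i$ and $k-i$.
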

\begin{rmk}
We use the unified assumptions \eqref{eq:assumption_gw} and \eqref{eq:assumption_brw} for convenience. These assumptions can be further refined for specific cases: 
\begin{itemize}
\item
In the critical case $\mean=1$, $\brwD$ only need to have finite $(2+\delta)$-th moment (for any $\delta>0$) instead of exponential moments. 
\item 
In the supercritical case $\mean>1$, we only need the $L\log L$ condition, $\sum_{i=1}^\infty i\log i\gwCh(i)<\infty$, instead of the finite variance condition in \eqref{eq:assumption_gw}.
\item
In the subcritical case $\mean<1$, we only need the $L\log L$ condition for \Cref{chapter:smalltree} except in Part 2, \Cref{cor:st_height}.
\end{itemize}
\end{rmk}

This paper is mainly motivated by \cite{phycon} and \cite{phycon2},
where Ramola, Majumdar and Schehr studied the span and gaps for the branching Brownian motion via a PDE method. Their result corresponds to the continuous version of \Cref{thm:brw} with geometric $\gwCh$ and Gaussian $\brwD$. In particular, we show that the asymptotic for gap statistics are no longer independent of $k$ and $i$ for the critical case with non-geometric offspring distribution, see \Cref{rmk:phy}.

The study of the reduced Galton-Watson tree $\prune{n}(\gnT)$ at least dates back to Fleischmann and Prehn \cite{fleischmann1974grenzwertsatz}, Fleischmann and Siegmund-Schultze \cite{zbMATH03469834}, where it is showed that the limit for the critical case $\mean=1$ is the Yule tree. See also Curien and Le Gall \cite{zbMATH06696268} for properties and applications of the Yule tree.
In particular, for the critical Galton Watson tree conditioned on non-extinction, one has $\gnZ{n}=\Theta(n)$, and the conditioned case $\{0<\gnZ{n}\le\phi(n)\}$, where $\phi(n)=O(n),\phi(n)\rightarrow\infty$ is recently studied in Liu and Vatutin \cite{zbMATH07014723}.

The conditioned limiting behavior of the whole tree $\gnT$ (in contrast to the reduced tree) is known as the local limit. The general result (under the condition $\braces{\gnZ{n}>0}$) for the local limit is the Kesten's tree \cite{zbMATH04028592}, see also Geiger \cite{geiger_1999}. Further, there are detailed discussions for local limits conditioned on rare events ($\{\gnZ{n}<\epsilon\gnE\bracksof{\gnZ{n}}{\gnZ{n}>0}\}$ or $\{\gnZ{n}>\epsilon^{-1}\gnE\bracksof{\gnZ{n}}{\gnZ{n}>0}\}$), see Abraham and Delmas \cite{zbMATH06347442}, \cite{zbMATH06257619}, Abraham, Bouaziz and Delmas \cite{zbMATH07277656}.

In addition, we also establish an extension of the Ratio theorem (cf. Theorem 1.7.4, \cite{bookan}) as an byproduct. Indeed, let 
\[
\gwTr{n}{1,j}:=\gwP\parsof*{\gnZ{n}=j}{\gnZ{0}=1},
\]
denote the transition probabilities for the population of $n$ generations of the Galton-Watson tree, then we have that
\begin{prop}\label{pp:rationplus}
Fix $k\ge 2$. Under \eqref{eq:assumption_gw}, we can construct constants $\gwgamma$ (see Part 2, \Cref{pp:ratio}) and $C_4,C_5$ (see \Cref{rmk:ratio}) such that as $n\rightarrow\infty$,
\[
\frac{\gwTr{n}{1,k}}{\gwTr{n}{1,1}}-\frac{\gwTr{n-1}{1,k}}{\gwTr{n-1}{1,1}}
=
\begin{cases}
(C_4+o(1))n^{-2},        & \mean=1, \\
(C_5+o(1))\gwgamma^{n},  & \mean\ne 1,
\end{cases}
\]
\end{prop}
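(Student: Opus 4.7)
The plan is to work at the level of the offspring generating function. Write $f(s) := \sum_k \gwCh(k) s^k$ and $f_n := f^{\circ n}$, so that $\gwTr{n}{1,k} = f_n^{(k)}(0)/k!$ and
\[
r_n(k) := \frac{\gwTr{n}{1,k}}{\gwTr{n}{1,1}} = \frac{f_n^{(k)}(0)}{k!\,f_n'(0)}.
\]
By the Ratio theorem (Part 2, \Cref{pp:ratio}), $r_n(k) \to \nu_k$ as $n \to \infty$ for an explicit limit $\nu_k$. The proposition will follow once this is upgraded to a two-term expansion $r_n(k) = \nu_k + \delta_n(k)$ with $\delta_n(k) = (d_k + o(1))/n$ in the critical case and $\delta_n(k) = (e_k + o(1))\gwgamma^n$ in the non-critical case: telescoping $r_n(k) - r_{n-1}(k) = \delta_n(k) - \delta_{n-1}(k)$ immediately produces the rates $n^{-2}$ and $\gwgamma^n$, with $C_4$ and $C_5$ read off as $-d_k$ and $e_k(\gwgamma - 1)/\gwgamma$.

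\textbf{Non-critical case.} Let $q$ be the extinction probability and set $\lambda := f'(q) \in (0,1)$; this $\lambda$ plays the role of $\gwgamma$. By Koenig's linearization theorem there exists an analytic function $K$ with $K(q) = 0$, $K'(q) = 1$, and $K \circ f = \lambda K$ on a neighborhood of $q$, so that $f_n(s) = K^{-1}(\lambda^n K(s))$ wherever the iterates converge to $q$; under \eqref{eq:assumption_gw} this applies at $s = 0$ for all sufficiently large $n$, since $f_n(0) \to q$. Expanding $K^{-1}(w) = q + w + a_2 w^2 + \cdots$ and differentiating $k$ times at $s = 0$, each $f_n^{(k)}(0)$ becomes a convergent power series in $\lambda^n$ with coefficients polynomial in $K(0), K'(0), K''(0), \ldots$. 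Extracting the first two terms and dividing yields $r_n(k) = \nu_k + e_k \lambda^n + O(\lambda^{2n})$ with explicit $e_k$, and the proposition in this regime follows.

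\textbf{Critical case.} Now $f'(1) = 1$ and Koenig's method fails; we expand directly near the parabolic fixed point $s = 1$. Set $u_n := 1 - f_n(0)$ and $\sigma^2 := f''(1)$. The classical Kolmogorov estimate, refined by one order, gives $u_n = 2/(n\sigma^2) + o(1/n)$. Combined with $f'(1-u) = 1 - \sigma^2 u + o(u)$ and the product formula $f_n'(0) = \prod_{j=0}^{n-1} f'(f_j(0))$, this yields $f_n'(0) = c_1 n^{-2}(1 + a_1/n + o(1/n))$ for explicit constants $c_1, a_1$. Iterating Faà di Bruno on $f_n = f \circ f_{n-1}$ produces analogous expansions $f_n^{(k)}(0) = c_k n^{-2}(1 + a_k/n + o(1/n))$ for each $k \geq 1$. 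Dividing, we obtain $r_n(k) = \nu_k + d_k/n + o(1/n)$ and consequently $r_n(k) - r_{n-1}(k) = -d_k/n^2 + o(1/n^2)$.

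The main obstacle is the critical case: sharpening the Kolmogorov expansion of $u_n$ by one order under the minimal moment hypothesis in \eqref{eq:assumption_gw}, and then propagating the refined asymptotics through the Faà di Bruno recursion for $f_n^{(k)}(0)$ so that the $1/n$ correction is captured uniformly. The key technical input is a Cesàro-type control of the logarithmically divergent partial sum $\sum_{j<n} u_j = (2/\sigma^2)\log n + O(1)$, together with uniform bounds on the error $f(1-u) - (1 - u + \sigma^2 u^2/2)$ near $u = 0$. Once these are in place, the constants $C_4 = -d_k$ and $C_5 = e_k(\gwgamma - 1)/\gwgamma$ come out as explicit polynomial expressions in the moments of $\gwCh$ and in $k$, as asserted in \Cref{rmk:ratio}.
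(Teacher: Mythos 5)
There is a genuine gap, and it sits at the heart of your strategy. You propose to establish a two-term expansion $r_n(k)=\nu_k+\delta_n(k)$ with $\delta_n(k)=(d_k+o(1))/n$ in the critical case and then ``telescope'' to get $r_n(k)-r_{n-1}(k)=-d_k/n^2+o(1/n^2)$. This inference is invalid: from $\delta_n=d_k/n+o(1/n)$ one learns nothing about $\delta_n-\delta_{n-1}$ at the scale $n^{-2}$, because the two $o(1/n)$ error terms at consecutive indices need not cancel and each of them alone can dwarf $d_k/n^2$. You cannot differentiate an asymptotic expansion. To make the telescoping work you would need error control of order $o(1/n^2)$, i.e.\ essentially a three-term expansion of $r_n(k)$, and that in turn requires a second-order refinement of the Kolmogorov estimate for $1-\gwf_n(0)$ which is \emph{not} available under the bare finite-variance hypothesis of \eqref{eq:assumption_gw} (the next-order correction involves $\log n/n^2$ terms and higher moments). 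In the non-critical regime the telescoping itself would be sound if the expansion $\delta_n=(e_k+o(1))\gwgamma^n$ held, but your route to it via Koenigs linearization fails in the subcritical case: there $\gwq=1$ and $\gwf$ is only guaranteed analytic on the open unit disc, so there is no linearizing conjugacy in a neighborhood of the fixed point without assuming exponential offspring moments.

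The paper avoids all of this by never expanding $r_n(k)$ to second order. The difference is computed \emph{exactly} as a probability: by \Cref{cor:pruHeight} one has
\begin{equation*}
\frac{\gwTr{u}{1,k}}{\gwTr{u}{1,1}}-\frac{\gwTr{u-1}{1,k}}{\gwTr{u-1}{1,1}}
=\frac{\gwTr{u}{1,k}}{\gwTr{u}{1,1}}\,\pruP{u,k}\pars*{\gnZ{1}(\gnT)\ge 2}
=\pi_k\,\stP{k}\pars*{\gnH(\gnT)=u},
\end{equation*}
i.e.\ the telescoped quantity is (up to the factor $\pi_k$) the probability that the conditioned pruned tree branches at the root, equivalently that the limiting small tree has height exactly $u$. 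The asymptotics of $\pruP{u,k}(\gnZ{1}(\gnT)\ge2)$ are then obtained in \Cref{cor:pru_r} by decomposing over the subtree populations $k_1+\dots+k_r=k$, which reduces everything to \emph{first-order} limits of ratios $\gwTr{u-1}{1,k_i}/\gwTr{u}{1,1}$ supplied by the classical Ratio theorem (\Cref{pp:ratio}); multiplying by the known first-order asymptotics of $\gwTr{u}{1,1}$ gives $C_4$ and $C_5$. This is why the second-moment hypothesis suffices. If you want to salvage an analytic proof you must find a similar exact cancellation before passing to the limit; as written, your outline proves (at best) the one-term Ratio theorem again, not the refinement claimed in \Cref{pp:rationplus}.
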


The paper is organized as follows.
In \Cref{sec:preliminaries} we present systematically the notations and concepts needed.
In \Cref{chapter:smalltree} we study the genealogical properties of $\prune{n}(\gnT)$ and $\cut{n}(\prune{n}(\gnT))$, 
then we prove \Cref{thm:gw} and \Cref{pp:rationplus} in \Cref{pp:st} and  \Cref{rmk:ratio}.
In \Cref{sec:spatial}, we study the span and gaps, and prove \Cref{thm:brw} in \Cref{pp:rangeR} and \Cref{pp:gap}.

\section{Preliminaries}\label{sec:preliminaries}

\subsection{Trees}

A (locally finite, rooted) {\bf planar tree} $\gnT$ is a subset of integer-valued words, 
$\gnT\subset\cup_{n\ge 0}\mathbb N^n_+$, such that:
\begin{itemize}
    \item The {\bf root} $\varnothing\in \gnT$, where by convention we denote $\mathbb N_+^0=\{\varnothing\}$.
    \item If a {\bf node} $\gnNd=(\gnNd_1,\cdots,\gnNd_n)\in \gnT$, 
    then its {\bf parent} $\overleftarrow{\gnNd}:=(\gnNd_1,\cdots,\gnNd_{n-1})\in \gnT$. 
    \item For each node $\gnNd=(\gnNd_1,\cdots,\gnNd_n)\in \gnT$, 
    there exists an integer $\gnCh{\gnNd}(\gnT)\in\mathbb N$ called its {\bf number of children},
    such that for every $j\in\mathbb N, (\gnNd_1,\cdots,\gnNd_n,j)\in \gnT$ 
    if and only if $1\le j \le \gnCh{\gnNd}(\gnT)$.
\end{itemize} 
We only consider locally finite trees, i.e. $\gnCh{\gnNd}(\gnT)<\infty,\,\forall\gnNd\in\gnT$.
We give a few basic notations on trees:
\begin{itemize}
    \item The {\bf set of all planar trees} is denoted by $\gnTrees$.
    \item The {\bf generation/height of a node} is its length as a word, 
    i.e. if $\gnNd=(\gnNd_1,\cdots,\gnNd_n)$, then $\absolute{\gnNd}=n$. 
    The {\bf height of a tree} is then defined as
    $\gnH(\gnT):=\max\setof{\absolute{\gnNd}}{\gnNd\in\gnT}\in\mathbb N\cup\braces{\infty}.$
    \item The {\bf population of generation} $n$ is defined as
    $\gnZ{n}(\gnT):=\#\setof{\gnNd\in\gnT}{\absolute{\gnNd}=n}.$ 
    By construction, $\gnZ{0}(\gnT)=1$ for any tree $\gnT$. 
    \item A node $\gnNd=(\gnNd_1,\cdots,\gnNd_n)\in \gnT$ 
    is an {\bf ancestor} of another one $\gnNdd=(\gnNdd_1,\cdots,\gnNdd_{m})\in\gnT$, 
    denoted by $\gnNd\prec\gnNdd$, if $n<m$ and $\gnNd_i=\gnNdd_i,\,1\le i\le n.$
    The {\bf (youngest) common ancestor} of two nodes $\gnNd,\gnNdd\in\gnT$, 
    denoted by $\gnNd\wedge\gnNdd$, 
    is then the node in $\gnT$ with maximum height, 
    such that $\gnNd\wedge\gnNdd\preceq\gnNd,\gnNdd$. 
    \item For $\gnNd\in\gnT$, the {\bf subtree} rooted at $\gnNd\in\gnNd$ is defined as
    $\subtree{\gnT}{\gnNd}=\setof{\gnNdd\in\mathbb N^n_+}{\gnNd\gnNdd\in\gnT},$
    where $\gnNd\gnNdd$ stands for concatenation of words. 
    It is not hard to check that this set is a tree. 
    In particular, given that $\gnCh{\varnothing}(\gnT)=r$, 
    nodes in the first generations are labeled $1,2,\cdots,r$ by construction, 
    thus subtrees rooted at them are $\subtree{\gnT}{1},\cdots,\subtree{\gnT}{r}.$
\end{itemize}
When there is no confusion for the tree under consideration, we omit the reliance on $\gnT$ and write, for instance, $\gnZ{n}$ for $\gnZ{n}(\gnT)$.

Moreover, given a tree $\gnT$, one can attach the (not necessarily random) spatial structure $(\gnV{\gnNd})_{\gnNd\in\gnT}$:
We attach on each node $\gnNdd\ne\varnothing$ a displacement from its parent $\gnX{\gnNdd}\in\mathbb R$, and set the position of a node $\gnNd$ as
\[
\gnV{\gnNd}=\sum_{\varnothing\prec\gnNdd\preceq\gnNd}\gnX{\gnNdd},\,\gnV{\varnothing}=0.
\]
\if
Moreover, one can then list the positions of nodes in generation $n$, $(\gnV{\gnNd})_{|\gnNd|=n}$, in increasing order,
\[\gnV{n}^{(1)}\le\cdots\le\gnV{n}^{(\gnZ{n})}.\]
In this paper, we are interested in its {\bf span}
\[
\range{n}:=\gnV{n}^{(\gnZ{n})}-\gnV{n}^{(1)}
\]
and its {\bf gaps}
\[
\gap{n}{i}:=\gnV{n}^{(i+1)}-\gnV{n}^{(i)}.
\]
\fi

\subsection{The prune and cut operation}
To study the relative relations of nodes in generation $n$ while omitting irrelevant information,
we define the prune and cut operations on trees (recall the illustration in \Cref{fig1}):
\begin{figure}[ht]
\centering
\includegraphics[scale=0.6]{fig1.png}
\end{figure}
\begin{itemize}
    \item For any $\gnT\in\gnTrees$, we construct the {\bf pruned tree} at height $n$ by
    \[
    \prune{n}(\gnT):=
    \setof{\gnNd\in\gnT}{\exists\gnNdd\in\gnT,\absolute{\gnNdd}=n,\gnNd\preceq\gnNdd}.
    \]
    By convention, if $\gnZ{n}(\gnT)=0$, we take $\prune{n}(\gnT)=\braces{\varnothing}$.
    \item Moreover, we define the {\bf cut operation} by
    \[
    \ancestor{n}(\gnT) = \bigwedge_{{\absolute{\gnNd} = n},\,{\gnNd\in\gnT}} \gnNd,\,\text{and then }
    \cut{n}(\gnT)=\subtree{\gnT}{\ancestor{n}(\gnT)}.
    \]
    where by convention, $\ancestor{n}(\gnT)=\varnothing$ if $\gnZ{n}(\gnT)=0.$
\end{itemize}
In addition, we denote the {\bf set of all pruned trees} at height $n$ by
$\pruTrees{n}:=\prune{n}(\gnTrees),$ and  
the set of all pruned trees at height $n$ with $\gnZ{n}=k$ by $\pruTrees{n,k}$. 
In particular, $\pruTrees{n,0}$ contains only one element $\braces{\varnothing}$.
Since trees are assumed to be locally finite, we have that
$\pruTrees{n}=\cup_{k=0}^\infty\pruTrees{n,k}$. Since all these sets are countable, the problem of measurability is trivial.

These operations naturally extend to the branching random walk indexed by these trees, by translation such that the root is always pinned at $0$. Then by construction,
\begin{equation}\label{eq:operation_nature}
\range{n}(\gnsT)=\range{\gnH(\cut{n}(\gnsT))}(\cut{n}(\gnsT))=\range{n}(\prune{n}(\gnsT)),
\end{equation}
and the same thing applies to the gaps.

\subsection{Galton-Watson tree and Ratio theorem}
Let $\gwCh$ be a probability distribution on $\mathbb N$. 
The law of a {\bf Galton-Watson} tree with offspring distribution $\gwCh$
is a probability measure $\gwP$ on the set of planar trees $\gnTrees$, such that for all nodes $\gnNd$,
\[\gnCh{\gnNd}\overset{i.i.d.}{\sim}\gwCh.\]

\if
Throughout this paper we fix an offspring distribution $\gwCh$ such that
\begin{equation}\label{eq:assumption_gw}
\gnVar\gwCh<\infty,\,\gwCh(0),\gwCh(1)>0,\,\gwCh(0)+\gwCh(1)<1.
\end{equation}
These assumptions are essential for fundamental estimates in \Cref{pp:ratio}. 
In addition, we remark that as long as $n\ge k$, we have
$\gwP(\gnZ{n}(\gnT)=k)>0$, so that the conditional probability $\gwP\parsof*{\cdot}{\gnZ{n}(\gnT)=k}$ is well-defined for any $n\ge k$.
\fi

Clearly, the sequence $(\gnZ{n})$ is a Markov chain starting at $Z_0=1$ under $\gwP$, and one can then set its transition probabilities as
\[
\gwTr{n}{i,j}:=\gwP\parsof*{\gnZ{k+n}=j}{\gnZ{k}=i},
\]
where we take $k$ such that $\gwP(\gnZ{k}=i)>0.$ (Under the assumption $\eqref{eq:assumption_gw}$, this is always possible by taking $k$ large enough.)
In particular, 
\[
\gwTr{1}{1,i}=\gwCh(i).
\]

Moreover, we define the {\bf generating function} of this process as
\begin{equation}\label{eq:defn_f}
\gwf(s):=\gwE\pars*{s^{\gnZ{1}(\gnT)}}=\sum_{i=0}^\infty\gwTr{1}{1,i}s^i,
\end{equation}
then its derivatives are
\begin{equation}\label{eq:derivative_f}
\gwf^{(r)}(s)=r!\sum_{\ell\ge r}\binom{l}{r}\gwTr{1}{1,i}s^{\ell-r},
\end{equation}
and its iterations are 
\begin{equation}\label{eq:f_iteration}
\gwf_n(s):=\underbrace{\gwf \circ \gwf \circ \dotsb \circ \gwf}_{\text{$n$ times}}
=\gwE\pars*{s^{\gnZ{n}(\gnT)}}=\sum_{\ell\geq0} \gwTr{n}{1,\ell} s^\ell.
\end{equation}
We also define the {\bf extinction probabilities} as
\begin{equation}\label{eq:defn_q}
\gwq_n:=\gwTr{n}{1,0}=\gwf_n(0),\, \gwq:=\lim_{n\rightarrow\infty}\gwq_n.
\end{equation}
Clearly, $(\gwq_n)$ is a bounded increasing sequence, which guarantees the existence of $\gwq$. 
Moreover, it is standard (Athreya and Ney \cite[Theorem 1.5.1]{bookan}) that $\gwq=1$ if $\mean\le 1$ (except for the trivial case $\gwCh=\delta_1$), and $\gwq<1$ if $\mean>1$, where $\mean:=\sum_{i=1}^\infty i\gwCh(i)$ is the expected number of children.

We finish this section by citing some fundamental estimates that shall be used later,
\begin{prop}[Athreya and Ney {\cite[Section 1.7-1.11]{bookan}}]\label{pp:ratio}
Let $\gwCh$ be an offspring distribution such that
\begin{equation*}
\gwCh(0),\gwCh(1)>0,\,\gwCh(0)+\gwCh(1)<1,\,\mean<\infty.
\end{equation*}
\begin{enumerate}
\item 
There exists a sequence $(\pi_j)$ such that for any $j\ge 1$,
\[
	\lim_{n\rightarrow\infty}\frac{\gwTr{n}{1,j}}{\gwTr{n}{1,1}}\nearrow\pi_j\in(0,\infty),
\]
where $\nearrow$ means non-decreasing limit.
\item 
For any $t\in\mathbb Z$, $i,j,k,l\ge 1$,
\[
	\lim_{n\rightarrow\infty}\frac{\gwTr{n+t}{i,j}}{\gwTr{n}{k,l}}
	=\gwgamma^t\gwq^{i-k}\frac{i\pi_j}{k\pi_l},
\]
where $\gwq$ is the extinction probability in \eqref{eq:defn_q}, and $\gwgamma=\gwf'(\gwq)$. 
\item 
If $\mean=1,\sigma^2:=\sum_{i=1}^\infty i^2\gwCh(i)<\infty$, then for any $i,j\ge 1$,
\[
	\lim_{n\rightarrow\infty} n^2
	\gwTr{n}{i,j}=
	\frac{2i\pi_j}{\sigma^2\sum_{k=1}^\infty \pi_k (\gwCh(0))^k}.
\]
\item 
If $\mean\ne1$, $\sum_{i=1}^\infty i\log i\gwCh(i)<\infty$, then for any $i,j\ge 1$,
\[
    \lim_{n\rightarrow\infty}\gwgamma^{-n}\gwTr{n}{i,j}=i\gwq^{i-1}v_j,
\]
where $(v_j)$ is determined by $Q(s)=\sum_{j=0}^\infty v_js^j,0\le s<1$, 
with $Q$ the unique solution of
\[
Q(\gwf(s))=\gwgamma Q(s)(0\le s<1),\,Q(q)=0,\,\lim_{s\rightarrow \gwq}Q'(s)=1.
\]
\end{enumerate}
\end{prop}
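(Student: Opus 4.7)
The plan is to refine the first-order ratio limit $\pi_k^{(n)} := \gwTr{n}{1,k}/\gwTr{n}{1,1}\nearrow\pi_k$ from \Cref{pp:ratio}(1) by obtaining a one-term correction to the asymptotic expansions of $\gwTr{n}{1,k}$ in \Cref{pp:ratio}(3)--(4), and then propagating this correction through the ratio. Writing
\[
\pi_k^{(n)} - \pi_k^{(n-1)} = \frac{\gwTr{n}{1,k}\gwTr{n-1}{1,1} - \gwTr{n-1}{1,k}\gwTr{n}{1,1}}{\gwTr{n}{1,1}\gwTr{n-1}{1,1}},
\]
the leading orders in the numerator cancel thanks to \Cref{pp:ratio}(3)--(4), so the rate in the statement is governed entirely by the second-order correction in the expansion of $\gwTr{n}{1,k}$.

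For the non-critical case $\mean\ne1$ with $\gwgamma\in(0,1)$, I would use the Poincaré (Schröder) function $Q$ of \Cref{pp:ratio}(4). Since $Q$ is analytic near $\gwq$ with $Q(\gwq)=0$, $Q'(\gwq)=1$, its local inverse $Q^{-1}$ is analytic at $0$, and the Schröder identity $\gwf_n=Q^{-1}(\gwgamma^n Q)$ gives, by Taylor expansion of $Q^{-1}$ at $0$,
\[
\gwf_n(s) = \gwq + \gwgamma^n Q(s) + \tfrac{1}{2}(Q^{-1})''(0)\,\gwgamma^{2n}Q(s)^2 + O(\gwgamma^{3n}),
\]
uniformly on a fixed disk $\{|s|\le r\}$ with $r<1$. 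Extracting the coefficient of $s^k$ by Cauchy's formula yields $\gwTr{n}{1,k} = \gwgamma^n v_k + \gwgamma^{2n} w_k + O(\gwgamma^{3n})$ with explicit $w_k = \tfrac{1}{2}(Q^{-1})''(0)[s^k]Q(s)^2$. Substituting into the displayed difference, the $\gwgamma^{2n-1}$ terms in the numerator cancel, leaving an $O(\gwgamma^{3n-2})$ numerator against a denominator $\sim v_1^2\gwgamma^{2n-1}$, hence $(C_5+o(1))\gwgamma^n$ with $C_5$ explicit in $\gwgamma, v_1, v_k, w_1, w_k$.

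For the critical case $\mean=1$, $\gwgamma=1$, $\gwq=1$, I would work from the generating-function identity
\[
\frac{1}{1-\gwf_n(s)} = \frac{1}{1-s} + \frac{\sigma^2 n}{2} + \eta_n(s),
\]
and sharpen the Kesten--Ney--Spitzer estimate $\eta_n(s)/n\to 0$ to $\eta_n(s) = \eta_\infty(s)+o(1)$ uniformly on compact subsets of $[0,1)$, via the telescoping identity $\eta_n - \eta_{n-1} = \xi(\gwf_{n-1}(s))$ with $\xi(s):= 1/(1-\gwf(s))-1/(1-s)-\sigma^2/2$. Inverting and extracting the coefficient of $s^k$ then gives
\[
\gwTr{n}{1,k} = \frac{2\pi_k}{\sigma^2 S\, n^2}\Bigl(1 + \frac{a_k}{n} + o(n^{-1})\Bigr), \qquad S := \sum_{\ell\ge 1} \pi_\ell\,\mu(0)^\ell,
\]
for explicit $a_k$. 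Then $\pi_k^{(n)} = \pi_k + \pi_k(a_k-a_1)/n + O(n^{-2})$, and differencing in $n$ produces $(C_4+o(1))n^{-2}$ with $C_4 = \pi_k(a_1-a_k)$, the sign being consistent with $\pi_k^{(n)}\nearrow \pi_k$.

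The main obstacle is rigorously establishing the critical-case refinement $\eta_n = \eta_\infty + o(1)$ under only the finite second moment assumption of \eqref{eq:assumption_gw}. The telescoping increment $\xi(\gwf_{n-1}(s))$ is of order $1-\gwf_{n-1}(s)\sim 2/(\sigma^2 n)$, whose sum over $n$ diverges in the usual sense, so convergence of $\eta_n(s)$ must come either from a subtler cancellation exploiting the precise $(1-s)$-Taylor expansion of $\xi$ (whose first coefficient involves $\gwf'''(1)$, potentially infinite under the given assumptions) or from supplementary structural identities connecting the $\pi_k$. The non-critical case, by contrast, is technically routine since the Poincaré function is analytic on a full neighborhood of $\gwq$ and the $\gwgamma^{kn}$-expansion converges geometrically; all remaining steps (coefficient extraction by Cauchy's formula, substitution into the ratio difference, identification of $C_4, C_5$ with the constants of \Cref{rmk:ratio}) are then explicit algebra.
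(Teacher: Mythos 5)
The statement you were asked to prove is Proposition~\ref{pp:ratio}, which in the paper is an imported classical result (Athreya--Ney, \cite[Sections 1.7--1.11]{bookan}) and is given without proof. Your proposal does not prove it: on the contrary, it invokes parts (1), (3) and (4) of \Cref{pp:ratio} as known inputs, so as an argument for \Cref{pp:ratio} it is circular. What you have actually written is a proof sketch of \Cref{pp:rationplus} (equivalently \Cref{rmk:ratio}), the second-order refinement of the ratio theorem, which is a different statement.

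Even judged as a proof of \Cref{pp:rationplus}, your route diverges from the paper's and the divergence matters. You try to extract the asymptotics of the difference $\gwTr{n}{1,k}/\gwTr{n}{1,1}-\gwTr{n-1}{1,k}/\gwTr{n-1}{1,1}$ from a \emph{second-order} term in the expansion of $\gwTr{n}{1,k}$ itself, and you correctly flag that in the critical case this requires sharpening $\eta_n(s)=\tfrac{1}{1-\gwf_n(s)}-\tfrac{1}{1-s}-\tfrac{\sigma^2 n}{2}$ to $\eta_\infty(s)+o(1)$; under only the finite-variance hypothesis of \eqref{eq:assumption_gw} this refinement is not available (the natural error term involves $\gwf'''(1)$, and the telescoped increments are of order $1/n$, so the cancellation you hope for has no source), so the critical case of your argument has a genuine gap. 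The paper avoids this entirely by a probabilistic identity: \Cref{cor:st_height}(1) shows that the difference of ratios equals $\pi_k\,\stP{k}(\gnH(\gnT)=u)$, i.e.\ it is (up to the constant $\pi_k$) the probability that the reduced tree branches exactly at height $u$, which by \Cref{cor:pru_r} with $r=2$ is asymptotically $\gwgamma^{-2}\tfrac{\gwf''(\gwq)}{2}\tfrac{\sum_i\pi_i\pi_{k-i}}{\pi_k}\,\gwTr{u}{1,1}$; only the \emph{first-order} asymptotics of $\gwTr{u}{1,1}$ from \Cref{pp:ratio}(3)--(4) are then needed, and $\gwf''(\gwq)<\infty$ is exactly the finite second moment already assumed. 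Your supercritical/subcritical treatment via the Schr\"oder function is plausible and would yield the $(C_5+o(1))\gwgamma^n$ rate, but it is more machinery than the paper needs and does not rescue the critical case.
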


\subsection{Branching random walk and Cramer's theorem}
On a Galton-Watson tree $\gnT$, we shall consider the {\bf branching random walk} $(\gnV{\gnNd})_{\gnNd\in\gnT}$
by attaching i.i.d. spatial displacements
\[\gnX{\gnNd}\overset{i.i.d.}{\sim}\brwD,\,\forall \gnNd\in\gnT\backslash\{\varnothing\},\]
whose probability measure is still denoted by $\gwP$ for simplicity. For other probability measures on trees that we shall construct, we also abuse the same notations for the corresponding spatial process.

Moreover, we cite a fundamental estimate for random walks, 
\begin{lem}\label{lm:cremer}
Let $\brwD$ be a distribution on $\mathbb R$, and let $(\gnRV_i)$ be i.i.d. random variables distributed as $\brwD$. Assume that $\gnE[\gnRV_1]=0$, $\gnVar(\gnRV_1)=1$.
\begin{enumerate}
\item \cite[Theorem 5.7]{petrov1995limit}
If $\gnE\bracks*{|\gnRV_1|^{2+\delta}}<\infty$, then
there exists $C_1,C_2,C_3>0$ such that for any $n\ge 1,x>0$,
\[
	\gnP\pars*{\sum_{i=1}^n \gnRV_i>x}\le \frac{C_1n}{x^3}+e^{-C_2\frac{x^2}{n}},
\]
\[
	\absolute*{
			\gnP\pars*{\sum_{i=1}^n \gnRV_i\le x} 
			- \Phi\pars*{\frac{x}{\sqrt n}}
		}
	\le \frac{C_3}{\sqrt n},
\]
where $\Phi(x)$ is the cumulative distribution function of the standard Gaussian distribution $\mathcal{N}(0,1)$.
\item (Cramer's theorem)
If
\[
\Lambda(t):=\log\gnE[\exp(t\gnRV_1)]<\infty,\,\forall t\in\mathbb R,
\]
then
\[
\lim_{n\rightarrow\infty}\frac{1}{n}\log
		\pars*{\gnP\pars*{\sum_{i=1}^n \gnRV_i\ge nx}}
=-\sup_{t\in\mathbb R}(tx-\Lambda(t)).
\]
\end{enumerate}
\end{lem}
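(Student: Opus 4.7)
The plan is to invoke the two statements as cited, since both are entirely classical, but here is how I would reconstruct the arguments if I had to do so from scratch.

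For part (1), the tail bound is a Fuk--Nagaev style estimate obtained by truncation. Split $\gnRV_i = \gnRV_i \mathbf{1}_{\{|\gnRV_i| \le x\}} + \gnRV_i \mathbf{1}_{\{|\gnRV_i| > x\}}$. A union bound combined with Markov's inequality applied with the $(2+\delta)$-moment controls the probability that any summand exceeds $x$ by $n\, \gnP(|\gnRV_1|>x) \lesssim n/x^{2+\delta}$, which supplies the polynomial $n/x^3$ term (after absorbing the surplus into the constant or taking $\delta \ge 1$). For the truncated, recentred sum each term is bounded by $x$ while the variance is at most $n$, so Bernstein's inequality delivers the sub-Gaussian factor $\exp(-C_2 x^2/n)$. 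The Berry--Esseen piece is the standard consequence of Esseen's smoothing inequality, comparing the characteristic function of $\sum \gnRV_i/\sqrt n$ to that of $\mathcal{N}(0,1)$ on a Fourier window of width $O(\sqrt n)$.

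For part (2), I would combine the Chernoff bound with an exponential tilting argument. The upper bound
\[
\gnP\pars*{\sum_{i=1}^n \gnRV_i \ge nx} \le e^{-tnx}\gnE\bracks*{e^{t\gnRV_1}}^n = e^{-n(tx - \Lambda(t))}
\]
holds for every $t \ge 0$, and optimizing yields $\le e^{-n\Lambda^*(x)}$ with $\Lambda^*(x) = \sup_{t\in\mathbb R}(tx - \Lambda(t))$. For the matching lower bound, pick $t^*$ realizing the supremum (so $\Lambda'(t^*)=x$), and tilt to the probability law with density $\exp(t^*y - \Lambda(t^*))$ with respect to $\brwD$, under which the summands have mean $x$. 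Writing $S_n = \sum_{i=1}^n \gnRV_i$,
\[
\gnP\pars*{S_n \ge nx} = \gnE^{t^*}\bracks*{e^{-t^*S_n + n\Lambda(t^*)}\mathbf{1}_{\{S_n\ge nx\}}},
\]
where $\gnE^{t^*}$ denotes expectation under the tilted law. Restricting the indicator to $S_n \in [nx, n(x+\epsilon)]$ and invoking the central limit theorem under the tilted law produces a lower bound of order $e^{-n(t^*x - \Lambda(t^*)) - \epsilon n t^*}$; letting $\epsilon\downarrow 0$ matches the Chernoff rate.

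The only obstacle is purely technical bookkeeping: in part (1) one has to check that the constants produced by Bernstein and Markov combine into the specific form stated, and in part (2) one must justify the existence of $t^*$ (assured by finiteness of $\Lambda$ on all of $\mathbb R$ together with strict convexity). Since the paper needs these estimates only as black boxes driving the subsequent branching random walk computations, no sharpening beyond the quoted statements is required.
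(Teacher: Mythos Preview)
The paper gives no proof of this lemma at all: both parts are stated as citations (Petrov for the Fuk--Nagaev/Berry--Esseen inequalities, and Cram\'er's theorem as a named classical result), and the authors use them purely as black boxes. Your proposal correctly recognises this and then sketches the standard textbook arguments---truncation plus Bernstein for the tail, Esseen's smoothing lemma for Berry--Esseen, and Chernoff plus exponential tilting for Cram\'er---which are exactly how one would reconstruct them. So there is nothing to compare: you have supplied more than the paper does, and what you supply is the canonical route.

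One small caveat on part (1): with only a $(2+\delta)$-moment the truncation argument naturally produces $n/x^{2+\delta}$, not $n/x^3$; the exponent $3$ in the paper's display presumably reflects $\delta\ge 1$ (or simply a harmless slackening, since in the later application only crude polynomial-versus-Gaussian competition matters). Your parenthetical ``after absorbing the surplus into the constant or taking $\delta\ge 1$'' already flags this, so no gap.
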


\section{Galton-Watson trees conditioned on rarely survival}\label{chapter:smalltree}
\subsection{Pruned Galton-Watson trees}
We first study the distribution of $\prune{n}(\gnT)$. Recall that $\pruTrees{n,k}$ stands for the set of all pruned trees with $k$ nodes in generation $n$.
\begin{defn}
For any pair of integers $(n,k)$ such that $\gwTr{n}{1,k}>0$, 
we denote by $\pruP{n,k}$ the law of $\prune{n}(\gnT)$, supported on $\pruTrees{n,k}$, 
where $\gnT$ is sampled under the law of $\gwP\parsof*{\cdot}{\gnZ{n}=k}$.
In other words, for any $A\subset\pruTrees{n,k}$,
\begin{equation}\label{eq:defnPGW}
\begin{aligned}
\pruP{n,k}(A)&=\gwP\parsof*{\prune{n}(\gnT)\in A}{\gnZ{n}=k}=\frac{\gwP\pars*{\prune{n}(\gnT)\in A}}{\gwTr{n}{1,k}}.
\end{aligned}
\end{equation}
\end{defn}
Recall that $\subtree{\gnT}{i}$ denotes the $i$-th subtree of $\gnT$ rooted at the first generation, then
\begin{prop}\label{prop:condi_is_multi}
Under \eqref{eq:assumption_gw}, for any pair of integers $(n,k)$ such that $\gwTr{n}{1,k}>0$, 
let $r\ge 1$ and let $k_1,\cdots,k_r$ be positive integers such that
$\sum_{i=1}^r k_i=k.$
Then for any $A_i\subseteq\pruTrees{n-1,k_i},\,1\le i\le r$, 
\begin{align*}
&\pruP{n,k}\pars*{\gnZ{1}(T)=r,\subtree{\gnT}{i}\in A_i, 1\le i\le r}\\
=&
\frac{1}{\gwTr{n}{1, k}}
\frac{\gwf^{(r)}(\gwq_{n-1})}{r!}
\prod_{i=1}^r \gwTr{n-1}{1,k_i}
\prod_{i=1}^r \pruP{n-1,k_i}(A_i),
\end{align*}
where $\gwf$ and $\gwq_{n-1}$ are defined in \eqref{eq:defn_f}, \eqref{eq:defn_q}.
\end{prop}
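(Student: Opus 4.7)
The plan is to compute the unconditional probability $\gwP(\gnZ{1}(\prune{n}(\gnT))=r,\,\subtree{\prune{n}(\gnT)}{i}\in A_i,\,1\le i\le r)$ directly, and then divide by $\gwTr{n}{1,k}$ according to \eqref{eq:defnPGW}. The idea is to condition on the number $R:=\gnCh{\varnothing}(\gnT)$ of children of the root in the \emph{unpruned} tree and exploit the branching property: the $R$ subtrees $\subtree{\gnT}{1},\dots,\subtree{\gnT}{R}$ are i.i.d.\ Galton--Watson trees independent of $R$.

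First I would observe that the pruned tree has exactly $r$ children at the root if and only if exactly $r$ of the original $R$ subtrees contain a node at height $n-1$, while the remaining $R-r$ subtrees have $\gnZ{n-1}=0$ (so they disappear under $\prune{n}$, contributing the factor $\gwq_{n-1}$ each). Moreover, for the $r$ surviving subtrees, ordered by their position among the children of the root, the $i$-th one must satisfy $\gnZ{n-1}=k_i$ and its pruned version at height $n-1$ must belong to $A_i$. By \eqref{eq:defnPGW} applied at level $n-1$, this happens with probability $\gwTr{n-1}{1,k_i}\,\pruP{n-1,k_i}(A_i)$ for each $i$.

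Next I would enumerate the $\binom{R}{r}$ ways to choose which $r$ of the $R$ root-children are the surviving ones (their relative order being fixed by the planar labelling, so no further factor $r!$ appears). Combining with the branching property, the unconditional probability becomes
\[
\sum_{R\ge r}\gwCh(R)\binom{R}{r}\gwq_{n-1}^{R-r}\prod_{i=1}^r \gwTr{n-1}{1,k_i}\,\pruP{n-1,k_i}(A_i).
\]
Now I would recognize via \eqref{eq:derivative_f} that
\[
\sum_{R\ge r}\gwCh(R)\binom{R}{r}\gwq_{n-1}^{R-r}=\frac{\gwf^{(r)}(\gwq_{n-1})}{r!},
\]
which gives the claimed formula after dividing by $\gwTr{n}{1,k}$.

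The proof is essentially bookkeeping; the only subtle point I would want to state carefully is the combinatorial factor. Because the planar order of the $r$ surviving children is inherited from that of the $R$ original ones, the matching between the surviving positions and the labels $1,\dots,r$ in $\prune{n}(\gnT)$ is deterministic once the subset is chosen, producing $\binom{R}{r}$ rather than $R!/(R-r)!$. After this is cleanly set up, the identification of the series as $\gwf^{(r)}(\gwq_{n-1})/r!$ is immediate from \eqref{eq:derivative_f}, and the hypothesis $\gwTr{n}{1,k}>0$ is used only to make the conditioning well-defined.
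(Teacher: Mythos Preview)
Your proposal is correct and follows essentially the same approach as the paper: both condition on the number of children of the root in the unpruned tree, sum over the $\binom{\ell}{r}$ choices of surviving positions, use that the extinct subtrees contribute $\gwq_{n-1}$ each while the surviving ones contribute $\gwTr{n-1}{1,k_i}\,\pruP{n-1,k_i}(A_i)$, and then recognize the resulting series as $\gwf^{(r)}(\gwq_{n-1})/r!$ via \eqref{eq:derivative_f}. Your explicit remark on why the combinatorial factor is $\binom{R}{r}$ rather than $R!/(R-r)!$ is a nice clarification that the paper leaves implicit.
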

\begin{proof}
By \eqref{eq:defnPGW}, 
\begin{align*}
 &\gwTr{n}{1,k}\pruP{n,k}\pars*{\gnZ{1}(T)=r,\subtree{\gnT}{i}\in A_i, 1\le i\le r}\\
=&\gwP\pars*{\gnZ{1}\pars*{\prune{n}(\gnT)}=r,(\prune{n}(\gnT))[i]\in A_i,1\le i\le r}\\
=&\sum_{\ell\ge r}\sum_{1\le j_1<\cdots<j_r\le \ell}\gwP(\gnZ{1}(\gnT)=\ell;
\prune{n-1}(\subtree{\gnT}{j_i})\in A_i,i=1,\cdots,r;\\
&\qquad\qquad\qquad\qquad\qquad\qquad\qquad\qquad
\gnZ{n-1}(\subtree{\gnT}{i})=0, i\not\in\braces{j_1,\cdots,j_r}).
\end{align*}

Let $B_i=\setof*{\gnT\in\gnTrees}{\prune{n-1}{\gnT}\in A_i}$, 
by the self-similarity of Galton-Watson trees, 
the equation above can be further simplified as
\begin{equation}\label{eq:inPrune}
\begin{aligned}
 &\gwTr{n}{1,k}\pruP{n,k}\pars*{\gnZ{1}(T)=r,\subtree{\gnT}{i}\in A_i, 1\le i\le r}\\
=&\sum_{\ell\ge r}\sum_{1\le j_1<\cdots<j_r\le \ell}
\gwTr{1}{1,\ell}\gwq_{n-1}^{\ell-r}\prod_{i=1}^r\gwP(B_i).
\end{aligned}
\end{equation}
Moreover, by \eqref{eq:defnPGW}, we have that
\[
\gwP(B_i)=\gwTr{n-1}{1,k_i}\pruP{n-1,k_i}(A_i),
\]
put it in \eqref{eq:inPrune}, and it suffices to show that
\[
\sum_{\ell\ge r}\sum_{1\le j_1<\cdots<j_r\le \ell}
\gwTr{1}{1,\ell}\gwq_{n-1}^{\ell-r}=
\frac{\gwf^{(r)}(\gwq_{n-1})}{r!}.
\]
Indeed, we have
\begin{align*}
&\sum_{\ell\ge r}\sum_{1\le j_1<\cdots<j_r\le \ell}
\gwTr{1}{1,\ell}\gwq_{n-1}^{\ell-r}\\
=&\sum_{\ell\ge r}\binom{\ell}{r}\gwTr{1}{1,\ell}\gwq_{n-1}^{\ell-r}
=\frac{\gwf^{(r)}(\gwq_{n-1})}{r!},
\end{align*}
where the second line follows from \eqref{eq:derivative_f}.
\end{proof}

Moreover, we give two more properties of $\pruP{n,k}$: 
\begin{cor}\label{cor:pruHeight}
Under \eqref{eq:assumption_gw}, for any $1\le u\le n$,
any $A\subseteq\pruTrees{u,k}$,
\begin{align*}
&\pruP{n,k}(\gnZ{1}(\gnT)=\cdots=\gnZ{n-u}(\gnT)=1,\subtree{\gnT}{\underbrace{11\dotsb1}_{\text{$n-u$ times}}}\in A)\\
=&\frac{\gwTr{n}{1,1}}{\gwTr{n}{1,k}}
\frac{\gwTr{u}{1,k}}{\gwTr{u}{1,1}}
\pruP{u,k}(A),
\end{align*}
where $\underbrace{11\dotsb1}_{\text{$n-u$ times}}$ means the first node (and also the only node, under the condition $\gnZ{1}(\gnT)=\cdots=\gnZ{n-u}(\gnT)=1$) in generation $n-i$.
\end{cor}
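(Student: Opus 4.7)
The plan is to derive this as a direct iteration of \Cref{prop:condi_is_multi} in the special case $r=1,\, k_1=k$, combined with one identity that rewrites $\gwf'(\gwq_{n-1})$ as a ratio of transition probabilities.

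First I would apply \Cref{prop:condi_is_multi} with $r=1$ and $k_1=k$, taking $A_1 = A'$ for an arbitrary $A'\subseteq\pruTrees{n-1,k}$. This yields the one-step recurrence
\[
\pruP{n,k}\bigl(\gnZ{1}(\gnT)=1,\subtree{\gnT}{1}\in A'\bigr)
= \frac{\gwf'(\gwq_{n-1})\,\gwTr{n-1}{1,k}}{\gwTr{n}{1,k}}\,\pruP{n-1,k}(A').
\]
Next, I would identify the factor $\gwf'(\gwq_{n-1})$ with a concrete ratio of transitions. From \eqref{eq:f_iteration} we have $\gwf_n = \gwf\circ\gwf_{n-1}$, and differentiating at $s=0$ gives $\gwf_n'(0)=\gwf'(\gwq_{n-1})\gwf_{n-1}'(0)$; but by \eqref{eq:derivative_f} applied to $\gwf_n$, we have $\gwf_n'(0)=\gwTr{n}{1,1}$, so
\[
\gwf'(\gwq_{n-1}) = \frac{\gwTr{n}{1,1}}{\gwTr{n-1}{1,1}}.
\]
Substituting this back gives the one-step version of the target identity:
\[
\pruP{n,k}\bigl(\gnZ{1}(\gnT)=1,\subtree{\gnT}{1}\in A'\bigr)
= \frac{\gwTr{n}{1,1}}{\gwTr{n-1}{1,1}}\cdot\frac{\gwTr{n-1}{1,k}}{\gwTr{n}{1,k}}\,\pruP{n-1,k}(A').
\]

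Finally, I would proceed by induction on $n-u$ (the base case $u=n$ is trivial). The inductive step applies the one-step formula to the event $\{\gnZ{1}(\gnT)=1\}$ intersected with the event $\{\gnZ{1}(\subtree{\gnT}{1})=\cdots=\gnZ{n-u-1}(\subtree{\gnT}{1})=1,\ \subtree{\subtree{\gnT}{1}}{11\dotsb1}\in A\}$ evaluated under $\pruP{n-1,k}$; the inductive hypothesis then handles the remaining $n-1-u$ levels. The product of the one-step ratios telescopes as
\[
\prod_{j=u+1}^{n} \frac{\gwTr{j}{1,1}}{\gwTr{j-1}{1,1}}\cdot\frac{\gwTr{j-1}{1,k}}{\gwTr{j}{1,k}}
= \frac{\gwTr{n}{1,1}}{\gwTr{u}{1,1}}\cdot\frac{\gwTr{u}{1,k}}{\gwTr{n}{1,k}},
\]
which matches the claimed constant.

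There is no real obstacle here: the only non-cosmetic step is recognising $\gwf'(\gwq_{n-1}) = \gwTr{n}{1,1}/\gwTr{n-1}{1,1}$ via the chain rule on $\gwf_n=\gwf\circ\gwf_{n-1}$, after which the statement falls out of a telescoping induction on the chain of subtrees $\varnothing\to 1\to 11\to\cdots\to 11\dotsb1$.
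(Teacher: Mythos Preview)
Your proposal is correct and follows essentially the same route as the paper: apply \Cref{prop:condi_is_multi} with $r=1$, rewrite $\gwf'(\gwq_{n-1})=\gwTr{n}{1,1}/\gwTr{n-1}{1,1}$ via the chain rule on $\gwf_n=\gwf\circ\gwf_{n-1}$, and then iterate/telescope $n-u$ times. The paper's proof is exactly this, stated slightly more tersely.
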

\begin{proof}
Take $r=1$ in \Cref{prop:condi_is_multi}, we have that
\begin{align*}
\pruP{n,k}\pars*{\gnZ{1}(T)=1,\subtree{\gnT}{1}\in B}
&=\frac{\gwTr{n-1}{1,k}}{\gwTr{n}{1, k}}\gwf'(\gwq_{n-1})\pruP{n-1,k}(B)\\
&=\frac{\gwTr{n-1}{1,k}}{\gwTr{n}{1,k}}\frac{\gwTr{n}{1,1}}{\gwTr{n-1}{1,1}}\pruP{n-1,k}(B),
\end{align*}
for any $B\subseteq\pruTrees{n-1,k}$,
where we use \eqref{eq:f_iteration} and \eqref{eq:defn_q} to deduce that 
$\gwf'(\gwq_{n-1})=\frac{\gwf'_n(0)}{\gwf'_{n-1}(0)}=\frac{\gwTr{n}{1,1}}{\gwTr{n-1}{1,1}}.$
The result follows by using this relation $n-u$ times inductively.
\end{proof}
\begin{cor}\label{cor:pru_r}
Under \eqref{eq:assumption_gw}, for any $r,k\ge 2$, if $\gwf^{(r)}(\gwq)<\infty$, then
\begin{equation*}
\begin{aligned}
\lim_{n\rightarrow\infty}\frac{\pruP{n,k}(\gnZ{1}(\gnT)= r)}{\gwTr{n}{1, 1}^{r-1}}
=\gwgamma^{-r} \frac{\gwf^{(r)}(\gwq)}{r!}
\sum_{\substack{k_1, k_2, \cdots, k_r \ge 1\\ k_1 + \dotsb + k_r = k}}
\frac{\pi_{k_1} \dotsb \pi_{k_r}}{\pi_k},
\end{aligned}
\end{equation*}
\begin{equation*}
\begin{aligned}
\lim_{n\rightarrow\infty}\frac{\pruP{n,k}(\gnZ{1}(\gnT)\ge r)}{\gwTr{n}{1, 1}^{r-1}}
=\gwgamma^{-r} \frac{\gwf^{(r)}(\gwq)}{r!}
\sum_{\substack{k_1, k_2, \cdots, k_r \ge 1\\ k_1 + \dotsb + k_r = k}}
\frac{\pi_{k_1} \dotsb \pi_{k_r}}{\pi_k},
\end{aligned}
\end{equation*}
where $\gwf$ is defined in \eqref{eq:defn_f}, $\gwq,\gwgamma$ and $(\pi_k)$ are defined in \Cref{pp:ratio}.
\end{cor}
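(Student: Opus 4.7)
The plan is to apply \Cref{prop:condi_is_multi} with each $A_i$ taken to be the full set $\pruTrees{n-1,k_i}$, so that $\pruP{n-1,k_i}(A_i)=1$. This yields directly
\[
\pruP{n,k}\pars*{\gnZ{1}(\gnT) = r}
= \frac{1}{\gwTr{n}{1, k}}\,\frac{\gwf^{(r)}(\gwq_{n-1})}{r!}
\sum_{\substack{k_1, \ldots, k_r \ge 1 \\ k_1 + \cdots + k_r = k}}
\prod_{i=1}^r \gwTr{n-1}{1, k_i},
\]
so the first identity reduces to an asymptotic analysis of each summand.

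To expose the rate $\gwTr{n}{1,1}^{r-1}$, I would regroup the factors as
\[
\frac{\prod_{i=1}^r \gwTr{n-1}{1, k_i}}{\gwTr{n}{1,k}\,\gwTr{n}{1,1}^{r-1}}
= \frac{\prod_{i=1}^r \bigl(\gwTr{n-1}{1,k_i}/\gwTr{n-1}{1,1}\bigr)}{\gwTr{n}{1,k}/\gwTr{n}{1,1}}
\cdot \left(\frac{\gwTr{n-1}{1,1}}{\gwTr{n}{1,1}}\right)^{r},
\]
and invoke \Cref{pp:ratio}: part (1) makes the first factor tend to $\prod_i \pi_{k_i}/\pi_k$, while part (2) applied with $t=-1$ and $i=j=k=l=1$ shows that $\gwTr{n-1}{1,1}/\gwTr{n}{1,1}\to\gwgamma^{-1}$, so the second factor tends to $\gwgamma^{-r}$. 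For the prefactor $\gwf^{(r)}(\gwq_{n-1})$, formula \eqref{eq:derivative_f} exhibits $\gwf^{(r)}$ as a power series with non-negative coefficients and $\gwq_{n-1}\nearrow\gwq$ by \eqref{eq:defn_q}, so monotone convergence (equivalently, Abel's theorem) gives $\gwf^{(r)}(\gwq_{n-1})\to \gwf^{(r)}(\gwq)$ under the finiteness hypothesis. Since the composition sum has at most $\binom{k-1}{r-1}$ terms, exchanging sum and limit is trivial, proving the first identity.

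For the $\{\gnZ{1}(\gnT)\ge r\}$ version, the key observation is that every height-$1$ node of $\prune{n}(\gnT)$ has, by construction, at least one descendant at height $n$, so $\gnZ{1}(\prune{n}(\gnT))\le\gnZ{n}(\prune{n}(\gnT))=k$. Therefore
\[
\pruP{n,k}\pars*{\gnZ{1}(\gnT)\ge r}
= \sum_{r'=r}^{k}\pruP{n,k}\pars*{\gnZ{1}(\gnT)=r'}
\]
is a finite sum, and applying the first identity to each $r'\ge r$ shows that the terms with $r'>r$ are of order $\gwTr{n}{1,1}^{r'-1}=o\bigl(\gwTr{n}{1,1}^{r-1}\bigr)$, since \Cref{pp:ratio} forces $\gwTr{n}{1,1}\to 0$ in every regime (polynomially when $\mean=1$, geometrically otherwise). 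Only the $r'=r$ term survives, matching the right-hand side of the first identity.

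The main subtlety I foresee is the continuity of $\gwf^{(r)}$ at $\gwq$ in the critical case $\gwq=1$, which rests on Abel's theorem for power series with non-negative coefficients and is precisely why the hypothesis $\gwf^{(r)}(\gwq)<\infty$ is imposed. Beyond this, the argument is essentially bookkeeping built on \Cref{prop:condi_is_multi} and \Cref{pp:ratio}.
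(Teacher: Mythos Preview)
Your argument for the first identity is exactly the paper's: apply \Cref{prop:condi_is_multi} with $A_i=\pruTrees{n-1,k_i}$, sum over compositions of $k$, divide by $\gwTr{n}{1,1}^{r-1}$, and take the limit of each of the finitely many factors via \Cref{pp:ratio}. Your explicit justification of $\gwf^{(r)}(\gwq_{n-1})\to\gwf^{(r)}(\gwq)$ via monotone convergence is a nice touch the paper leaves implicit.

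For the second identity your route is different from the paper's and, as written, has a gap. The decomposition $\pruP{n,k}(\gnZ{1}\ge r)=\sum_{r'=r}^{k}\pruP{n,k}(\gnZ{1}=r')$ is valid (the observation $\gnZ{1}\le k$ is correct), but to discard the terms $r'>r$ you invoke the first identity at level $r'$, which requires $\gwf^{(r')}(\gwq)<\infty$. When $\gwq=1$ (that is, $\mean\le1$) this is strictly stronger than the stated hypothesis $\gwf^{(r)}(\gwq)<\infty$; for instance, under \eqref{eq:assumption_gw} one only knows $\gwf''(1)<\infty$, so already the case $r=2$, $k\ge3$ would need the possibly false assumption $\gwf'''(1)<\infty$.

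The paper circumvents this by a different decomposition of $\{\gnZ{1}\ge r\}$: it fixes the last-generation populations $k_1,\dots,k_{r-1}$ of the first $r-1$ subtrees and lets all remaining subtrees contribute $k_r$ jointly, so that after summation only $\gwf^{(r)}(\gwq)$ appears, via the identity $\sum_{j=r-1}^{\ell-1}(\ell-j)\binom{j-1}{r-2}=\binom{\ell}{r}$. Your simpler route can also be repaired, but it takes an extra estimate: from the exact formula in \Cref{prop:condi_is_multi} one has $\pruP{n,k}(\gnZ{1}=r')/\gwTr{n}{1,1}^{r-1}\lesssim \gwf^{(r')}(\gwq_{n-1})\,\gwTr{n}{1,1}^{\,r'-r}$, and since $\gwf^{(r')}$ is non-decreasing, iterated integration gives
\[
\frac{(\gwq-\gwq_{n-1})^{r'-r}}{(r'-r)!}\,\gwf^{(r')}(\gwq_{n-1})
\;\le\; \gwf^{(r)}(\gwq)-\gwf^{(r)}(\gwq_{n-1})\longrightarrow 0,
\]
which together with the standard asymptotic $\gwTr{n}{1,1}=O(\gwq-\gwq_{n-1})$ kills the $r'>r$ terms without any higher-moment assumption.
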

\begin{proof}
For the first equation, by \Cref{prop:condi_is_multi}, we take $A_i=\pruTrees{n-1,k_i}$ and sum over all choices of $(k_i)$, then
\[
\pruP{n,k}(\gnZ{1}(\gnT)= r)=
\frac{1}{\gwTr{n}{1, k}}
\frac{\gwf^{(r)}(\gwq_{n-1})}{r!}
\sum_{\substack{k_1, k_2, \cdots, k_r \ge 1\\ k_1 + \dotsb + k_r = k}}
\prod_{i=1}^r \gwTr{n-1}{1,k_i}.
\]
Divide both sides by $\gwTr{n}{1,1}^{r-1}$, we have that
\[
\frac{\pruP{n,k}(\gnZ{1}(\gnT)= r)}{\gwTr{n}{1, 1}^{r-1}}=
\frac{\gwTr{n}{1, 1}}{\gwTr{n}{1, k}}
\frac{\gwf^{(r)}(\gwq_{n-1})}{r!}
\sum_{\substack{k_1, k_2, \cdots, k_r \ge 1\\ k_1 + \dotsb + k_r = k}}
\prod_{i=1}^r \frac{\gwTr{n-1}{1,k_i}}{\gwTr{n}{1, 1}}.
\]
For fixed $r$ and $k$, there are only finitely many terms on the right hand side, thus we can take the limit separately for each fraction by \Cref{pp:ratio}, and the result follows.

To deal with $\pruP{n,k}(\gnZ{1}(\gnT)\ge r)$, we sum over 
\[
\pruP{n,k}\pars*{\gnZ{1}(T)\ge r,\gnZ{n-1}(\subtree{\gnT}{i})=k_i, 1\le i\le r-1}
\]
for all $\sum_{i=1}^r k_i=k$. In other words, we consider the first $r-1$ subtrees to give $k_i$ offspring each, while the rest subtrees give $k_r$ offspring in total. 
By the proof of \Cref{prop:condi_is_multi}, we have that
\begin{align*}
&\gwTr{n}{1,k}\pruP{n,k}\pars*{\gnZ{1}(T)\ge r,\gnZ{n-1}(\subtree{\gnT}{i})=k_i, 1\le i\le r-1}\\
=&\sum_{\ell\ge r}\sum_{1\le j_1<\cdots<j_{r-1}< \ell}\gwP\bigg(
\gnZ{1}(\gnT)=\ell;
\gnZ{n-1}(\subtree{\gnT}{j_i})=k_i,i=1,\cdots,r-1;\\
&\qquad\qquad\qquad
\sum_{\substack{i<j_{r-1}\\i\not\in\braces{j_1,\cdots,j_{r-1}}}}\gnZ{n-1}(\subtree{\gnT}{i})=0;
\sum_{i>j_{r-1}}\gnZ{n-1}(\subtree{\gnT}{i})=k_r\bigg)\\
=&\sum_{\ell\ge r}\sum_{1\le j_1<\cdots<j_{r-1}< \ell}
\gwTr{1}{1,\ell}\prod_{i=1}^{r-1}\gwTr{n-1}{1,k_i}\cdot\gwq_{n-1}^{j_{r-1}-r+1}\gwTr{n-1}{\ell-j_{r-1},k_r}.
\end{align*}
Write $j_{r-1}=j$ for short, and we can simplify this term into
\begin{align*}&\gwTr{n}{1,k}\pruP{n,k}\pars*{\gnZ{1}(T)\ge r,\gnZ{n-1}(\subtree{\gnT}{i})=k_i, 1\le i\le r-1}\\
=&\gwTr{1}{1,\ell}\prod_{i=1}^{r-1}\gwTr{n-1}{1,k_i}
\sum_{\ell=r}^\infty
\sum_{j=r-1}^{\ell-1}\binom{j-1}{r-2}
\gwq_{n-1}^{j-r+1}\gwTr{n-1}{\ell-j,k_r}.
\end{align*}
Divide by $(\gwTr{n}{1,1})^r$, then
\begin{equation}\label{eq:pgw2}
\begin{aligned}
&\frac{\gwTr{n}{1,k}}{(\gwTr{n}{1,1})^r}\pruP{n,k}\pars*{\gnZ{1}(T)\ge r,\gnZ{n-1}(\subtree{\gnT}{i})=k_i, 1\le i\le r-1}\\
=&\gwTr{1}{1,\ell}\prod_{i=1}^{r-1}\frac{\gwTr{n-1}{1,k_i}}{\gwTr{n}{1,1}}
\sum_{\ell=r}^\infty
\sum_{j=r-1}^{\ell-1}\binom{j-1}{r-2}
\gwq_{n-1}^{j-r+1}\frac{\gwTr{n-1}{\ell-j,k_r}}{\gwTr{n}{1,1}}.
\end{aligned}
\end{equation}
By \Cref{pp:ratio} and dominated convergence, this term converges to
\begin{align*}
&\gwTr{1}{1,\ell}\gwgamma^{-r}\prod_{i=1}^{r-1}\pi_{k_i}
\sum_{\ell=r}^\infty
\sum_{j=r-1}^{\ell-1}\binom{j-1}{r-2}
\gwq^{j-r+1+\ell-j-1}(\ell-j)\pi_{k_r}\\
=&\gwTr{1}{1,\ell}\gwgamma^{-r}\prod_{i=1}^{r}\pi_{k_i}
\sum_{\ell=r}^\infty\binom{\ell}{r}
\gwq^{\ell-r}\\
=&\gwgamma^{-r}\frac{f^{(r)}(q)}{r!}\prod_{i=1}^{r}\pi_{k_i},
\end{align*}
by the elementary identities 
\[
\sum_{i=r-1}^{\ell-1}(\ell-i)\binom{i-1}{r-2}=\binom{\ell}{r},
\,
\gwf^{(r)}(s)=\sum_{\ell\ge r}r!\binom{\ell}{r}\gwTr{1}{1,\ell} s^{\ell-r}. 
\]
Moreover, by \Cref{pp:ratio} again, we have that
\[
\frac{\gwTr{n}{1,k}}{(\gwTr{n}{1,1})^r}=(1+o(1))\frac{\pi_k}{(\gwTr{n}{1,1})^{r-1}},
\]
thus \eqref{eq:pgw2} gives
\begin{equation*}
\begin{aligned}
&\lim_{n\rightarrow\infty}\frac{\pi_k}{(\gwTr{n}{1,1})^{r-1}}\pruP{n,k}\pars*{\gnZ{1}(T)\ge r,\gnZ{n-1}(\subtree{\gnT}{i})=k_i, 1\le i\le r-1}\\
=&\gwgamma^{-r}\frac{f^{(r)}(q)}{r!}\pi_{k_1}\cdots\pi_{k_r},
\end{aligned}
\end{equation*}
and the result follows by summing over all choices of $(k_i).$
\end{proof}
\begin{rmk}\label{assumption_derivative}
The condition $\gwf^{(r)}(s)<\infty$ is always true if $s<1$, while $\gwq<1$ if and only if $\mean\le 1$. Thus the condition $\gwf^{(r)}(\gwq)<\infty$ in \Cref{cor:pru_r} is trivially satisfied if $\mean>1$. If $\mean\le 1$, one can verify that this condition is equivalent to the moment condition $\sum_{i=1}^\infty i^r\gwCh(i)<\infty$.
\end{rmk}
\subsection{The small tree measure}
Now we study the composition of the cut and prune operation.
\begin{defn}\label{defn:st}
For any pair of integers $(n,k)$ such that $\gwTr{n}{1,k}>0$ and $k\ge 2$,
we denote by $\stP{n,k}$ the law of $\cut{n}(\gnT)$, where $\gnT$ is sampled under the law of $\pruP{n,k}$. 
In other words, for any $A\subseteq\gnTrees$,
\[
\stP{n,k}(A)=\pruP{n,k}(\cut{n}(\gnT)\in A).
\]
\end{defn}
We remark that $k$ is assumed to be at least $2$, since for $k=1$, the cut operation will always give the trivial tree $\braces{\varnothing}$. This measure $\stP{n,k}$ is developed in the following lemma:
\begin{lem}\label{lem:st}
Under \eqref{eq:assumption_gw}, for any pair of integers $(n,k)$ such that $\gwTr{n}{1,k}>0$ and $k\ge 2$,
$\stP{n,k}$ is supported on $\cup_{u=1}^n\pruTrees{u,k}\cap\setof{\gnT}{\gnZ{1}\ge 2}$. 
Moreover, for any $A\subseteq \cup_{u=1}^n\pruTrees{u,k}\cap\setof{\gnT}{\gnZ{1}\ge 2}$,
\begin{equation*}
\begin{aligned}
\stP{n,k}(A)
=\frac{\gwTr{n}{1,1}}{\gwTr{n}{1,k}}\sum_{u=1}^n\frac{\gwTr{u}{1,k}}{\gwTr{u}{1,1}}\pruP{u,k}(A\cap\pruTrees{u,k}).
\end{aligned}
\end{equation*}
\end{lem}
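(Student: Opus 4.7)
The strategy is to decompose on the depth of the youngest common ancestor $\ancestor{n}(\gnT)$ and to reduce each piece to \Cref{cor:pruHeight}.

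First I would pin down the support. For any $\gnT \in \pruTrees{n,k}$, let $n-u$ be the depth of $\ancestor{n}(\gnT)$; since $k \ge 2$ there are at least two distinct level-$n$ nodes, so $1 \le u \le n$. Because $\gnT$ is pruned at level $n$, every node of $\cut{n}(\gnT) = \subtree{\gnT}{\ancestor{n}(\gnT)}$ has a descendant at level $u$, and there are exactly $k$ such descendants, so $\cut{n}(\gnT) \in \pruTrees{u,k}$. The definition of \emph{youngest} common ancestor forces the root of $\cut{n}(\gnT)$ to have at least two children, giving $\gnZ{1}(\cut{n}(\gnT)) \ge 2$. This yields the claimed support $\bigcup_{u=1}^n \pruTrees{u,k} \cap \setof{\gnT}{\gnZ{1} \ge 2}$.

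Second, for any fixed $T \in \pruTrees{u,k}$ with $\gnZ{1}(T) \ge 2$, the key combinatorial identity is that, as subsets of $\pruTrees{n,k}$,
\[
\{\cut{n}(\gnT) = T\} = \bigl\{\gnZ{1}(\gnT) = \cdots = \gnZ{n-u}(\gnT) = 1,\ \subtree{\gnT}{\underbrace{11\dotsb 1}_{n-u}} = T\bigr\}.
\]
The chain condition forces $\ancestor{n}(\gnT)$ to sit at depth at least $n-u$, while the condition $\gnZ{1}(T) \ge 2$ prevents it from sitting any deeper.

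Finally, given $A \subseteq \bigcup_{u=1}^n \pruTrees{u,k} \cap \setof{\gnT}{\gnZ{1} \ge 2}$, I would partition $A = \bigsqcup_{u=1}^n (A \cap \pruTrees{u,k})$. Unioning the identity of the previous step over $T \in A \cap \pruTrees{u,k}$ (a disjoint union inside $\pruTrees{n,k}$) and then applying \Cref{cor:pruHeight} with $A \cap \pruTrees{u,k}$ in place of $A$ yields
\[
\pruP{n,k}\bigl(\cut{n}(\gnT) \in A \cap \pruTrees{u,k}\bigr) = \frac{\gwTr{n}{1,1}}{\gwTr{n}{1,k}} \frac{\gwTr{u}{1,k}}{\gwTr{u}{1,1}} \pruP{u,k}(A \cap \pruTrees{u,k}).
\]
Summing over $u$ gives exactly the stated formula. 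All the probabilistic content is already encoded in \Cref{cor:pruHeight}; this lemma is essentially the bookkeeping that organizes the cut operation by the depth of $\ancestor{n}(\gnT)$, so I anticipate no real obstacle beyond verifying the two set-theoretic identities above.
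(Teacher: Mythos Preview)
Your proposal is correct and follows essentially the same approach as the paper: both identify the support via the observation that $\gnZ{1}(\cut{n}(\gnT))\ge 2$ and $\cut{n}(\gnT)\in\pruTrees{u,k}$ for some $1\le u\le n$, then use the set-theoretic identity $\{\cut{n}(\gnT)\in A\cap\pruTrees{u,k}\}=\{\gnZ{1}=\cdots=\gnZ{n-u}=1,\ \subtree{\gnT}{11\cdots 1}\in A\cap\pruTrees{u,k}\}$ to reduce to \Cref{cor:pruHeight}, and finish by partitioning $A$ over $u$. Your write-up is in fact slightly more explicit about why that set identity holds (the two-sided inclusion argument via the depth of $\ancestor{n}$), which the paper leaves implicit.
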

\begin{proof}
For the first assertion, by construction, $\cut{n}(\gnT)$ has at least $2$ nodes in its first generation. Moreover, the trees on which we perform the cut operation are those in $\pruTrees{n,k}$, so $\cut{n}(\gnT)$ has height at most $n$, with all its leaves in the last generation. Thus it is an element in $\cup_{u=1}^n\pruTrees{u,k}\cap\setof{\gnT}{\gnZ{1}(\gnT)\ge 2}$. The converse is trivial.

Then it suffices to prove the second assertion for $A\subseteq \pruTrees{u,k}\cap\setof{\gnT}{\gnZ{1}(\gnT)\ge 2}.$ Indeed, by \Cref{defn:st} and \Cref{cor:pruHeight},
\begin{equation*}
\begin{aligned}
\stP{n,k}(A)
&=\pruP{n,k}(\cut{n}(\gnT)\in A)\\
&=\pruP{n,k}(\gnZ{1}(\gnT)=\cdots=\gnZ{n-u}(\gnT)=1,\subtree{\gnT}
{\underbrace{11\dotsb1}_{\text{$n-u$ times}}}\in A)\\
&=\frac{\gwTr{n}{1,1}}{\gwTr{n}{1,k}}
\frac{\gwTr{u}{1,k}}{\gwTr{u}{1,1}}
\pruP{u,k}(A).
\end{aligned}
\end{equation*}
Then the conclusion follows by partitioning a general set $A\subseteq \cup_{u=1}^n\pruTrees{u,k}\cap\setof{\gnT}{\gnZ{1}\ge 2}$ into $\cup_{u=1}^n (A\cap\pruTrees{u,k})$, and use the above equation on each part $A\cap\pruTrees{u,k}$.
\end{proof}
In fact, we notice that $n$ no longer plays a major role in \Cref{lem:st}, and we are motivated to take the limit $n\rightarrow\infty$. Moreover, this limit measure still has Galton-Watson-type branching properties:
\begin{prop}\label{pp:st}
Under \eqref{eq:assumption_gw}, fix $k\ge 2$, let $n\rightarrow\infty$, then the measures $\pars{\stP{n,k}}_n$ converge to a measure $\stP{k}$ supported on $\cup_{u=1}^\infty\pruTrees{u,k}\cap\setof{\gnT}{\gnZ{1}(\gnT)\ge 2}$, defined by
\begin{equation}\label{eq:st1}
\stP{k}(A)
=\frac{1}{\pi_k}\sum_{u=1}^\infty
\frac{\gwTr{u}{1,k}}{\gwTr{u}{1,1}}\pruP{u,k}(A\cap\pruTrees{u,k}),
\end{equation}
for any $A\subseteq\cup_{u=1}^\infty\pruTrees{u,k}\cap\setof{\gnT}{\gnZ{1}(\gnT)\ge 2}$.
Moreover, fix any $u>0$ such that $\gwTr{u}{1,k}>0$, let $r\ge 2$ and $k_1,\cdots,k_r\ge 1$ such that $\sum_{i=1}^r k_i=k$. Then for any $A_i\subseteq\pruTrees{u-1,k_i},$
\begin{equation}\label{eq:st2}
\begin{aligned}
&\stP{k}\pars*{\gnZ{1}(T)=r,\subtree{\gnT}{i}\in A_i, 1\le i\le r}\\
=&
\frac{\pi_{k_1}\cdots\pi_{k_r}}{\pi_k}
\frac{\gwTr{u-1}{1, 1}^{r}}{\gwTr{u}{1, 1}}
\frac{\gwf^{(r)}(\gwq_{u-1})}{r!}
\prod_{i=1}^r \stP{k_i}(\cut{u-1}(A_i)).
\end{aligned}
\end{equation}
\end{prop}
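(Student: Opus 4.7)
The plan is to treat the two assertions in turn. For \eqref{eq:st1} I would start from the explicit finite-$n$ formula given by \Cref{lem:st} and pass to the limit in $n$ term by term. For \eqref{eq:st2} I would then use \eqref{eq:st1} to reduce to a single layer of the finite-$n$ decomposition, apply \Cref{prop:condi_is_multi} to that layer, and finally re-express the resulting pruned-tree factors in terms of $\stP{k_i}$ by a second appeal to \eqref{eq:st1}.

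\textbf{Part 1: convergence and \eqref{eq:st1}.} Rewriting
\[
\stP{n,k}(A)=\frac{\gwTr{n}{1,1}}{\gwTr{n}{1,k}}\sum_{u=1}^n\frac{\gwTr{u}{1,k}}{\gwTr{u}{1,1}}\pruP{u,k}(A\cap\pruTrees{u,k})
\]
as in \Cref{lem:st}, the prefactor converges to $1/\pi_k$ by \Cref{pp:ratio}(1). For the sum I would argue by monotone convergence: the summands are non-negative, and taking $A$ to be the whole support of $\stP{n,k}$ in the above identity, the fact that $\stP{n,k}$ is a probability yields the exact bound
\[
\sum_{u=1}^n\frac{\gwTr{u}{1,k}}{\gwTr{u}{1,1}}\pruP{u,k}(\gnZ{1}(\gnT)\ge 2)=\frac{\gwTr{n}{1,k}}{\gwTr{n}{1,1}}\nearrow\pi_k.
\]
This exact bound dominates the partial sums for any $A$, so the series in \eqref{eq:st1} is absolutely convergent and each partial sum converges to the infinite sum. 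Combined with the convergence of the prefactor, this yields \eqref{eq:st1}, and reading the displayed identity in the limit also shows that $\stP{k}$ has total mass one.

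\textbf{Part 2: the recursive identity \eqref{eq:st2}.} The event $B=\{\gnZ{1}(\gnT)=r,\,\subtree{\gnT}{i}\in A_i,\,1\le i\le r\}$ with $A_i\subseteq\pruTrees{u-1,k_i}$ forces $\gnT$ to lie in $\pruTrees{u,k}\cap\{\gnZ{1}\ge 2\}$, so only the $u$-th term of \eqref{eq:st1} contributes:
\[
\stP{k}(B)=\frac{1}{\pi_k}\frac{\gwTr{u}{1,k}}{\gwTr{u}{1,1}}\pruP{u,k}(B).
\]
Applying \Cref{prop:condi_is_multi} to $\pruP{u,k}(B)$ produces the factor $\gwf^{(r)}(\gwq_{u-1})/r!$ together with the products $\prod_i\gwTr{u-1}{1,k_i}$ and $\prod_i\pruP{u-1,k_i}(A_i)$. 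The last step is to identify $\pruP{u-1,k_i}(A_i)$ with a multiple of $\stP{k_i}(\cut{u-1}(A_i))$. Since the map $\gnT\mapsto\cut{u-1}(\gnT)$ is a bijection from $\pruTrees{u-1,k_i}$ onto the essential structures of height at most $u-1$ with $k_i$ leaves (the inverse simply prepends the canonical spine of the correct length), \eqref{eq:st1} applied to $\cut{u-1}(A_i)$, stratified by the height $v\le u-1$ of the cut, combined with \Cref{cor:pruHeight} to compare the pruned-tree masses at levels $v$ and $u-1$, gives the clean identity
\[
\stP{k_i}(\cut{u-1}(A_i))=\frac{\gwTr{u-1}{1,k_i}}{\pi_{k_i}\gwTr{u-1}{1,1}}\,\pruP{u-1,k_i}(A_i).
\]
Substituting this and collecting the $\pi_{k_i}$ and $\gwTr{u-1}{1,1}$ factors yields exactly \eqref{eq:st2}.

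\textbf{Main obstacle.} The principal delicate point is the domination step in Part 1. A direct appeal to \Cref{cor:pru_r} only delivers the asymptotic bound $\pruP{u,k}(\gnZ{1}\ge 2)=O(\gwTr{u}{1,1})$, which would force a case-by-case analysis of the critical, subcritical and supercritical regimes (different rates of decay of $\gwTr{u}{1,1}$) to secure uniform-in-$u$ summability. The detour through the total-mass identity sidesteps this entirely: the partial sums are intrinsically bounded by $\pi_k$ in every regime, so monotone convergence applies uniformly and regime-free. A secondary subtlety is the conversion formula for $\pruP{u-1,k_i}(A_i)$ in Part 2, but once the bijectivity of $\cut{u-1}$ on $\pruTrees{u-1,k_i}$ is recognised it reduces to a one-line computation using \Cref{cor:pruHeight}.
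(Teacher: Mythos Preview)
Your proposal is correct and follows essentially the same route as the paper: both parts proceed via \Cref{lem:st} and \Cref{pp:ratio}(1) for the convergence, then reduce \eqref{eq:st2} to the single $u$-th summand of \eqref{eq:st1}, invoke \Cref{prop:condi_is_multi}, and convert each $\pruP{u-1,k_i}(A_i)$ back into $\stP{k_i}(\cut{u-1}(A_i))$ by stratifying on the height after cutting and applying \Cref{cor:pruHeight}. Your explicit total-mass domination argument in Part~1 is a welcome elaboration of what the paper leaves as ``follows directly,'' but it is not a different idea.
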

\begin{proof}
Convergence and \eqref{eq:st1} follows directly from \Cref{lem:st} and Part 1 of \Cref{pp:ratio}.

Then for \eqref{eq:st2}, by \eqref{eq:st1} and \Cref{prop:condi_is_multi}, we have that
\begin{equation}\label{eq:st3}
\begin{aligned}
&\stP{k}(\gnZ{1}(T)=r,\subtree{\gnT}{i}\in A_i, 1\le i\le r)\\
=&\frac{1}{\pi_k}
\frac{\gwTr{u}{1,k}}{\gwTr{u}{1,1}}
\pruP{u,k}(\gnZ{1}(T)=r,\subtree{\gnT}{i}\in A_i, 1\le i\le r)\\
=&\frac{1}{\pi_k}
\frac{1}{\gwTr{u}{1,1}}
\frac{\gwf^{(r)}(\gwq_{u-1})}{r!}
\prod_{i=1}^r \gwTr{u-1}{1,k_i}
\prod_{i=1}^r \pruP{u-1,k_i}(A_i).
\end{aligned}
\end{equation}
Decompose $A_i$ by the height of trees after the cut operation, 
\[A_i^{(x)}:=\setof*{\gnT\in A_i}{\gnH(\cut{u-1}(\gnT))=x},\]
then by \Cref{cor:pruHeight},
\begin{align*}
\pruP{u-1,k_i}(A_i)=
\frac{\gwTr{u-1}{1,1}}{\gwTr{u-1}{1,k_i}}
\sum_{x=1}^{u-1}
\frac{\gwTr{x}{1,k_i}}{\gwTr{x}{1,1}}
\pruP{x,k_i}(\cut{u-1}(A_i^{(x)})).
\end{align*}
Since $\cut{u-1}$ is injective on $A_i$, we have 
\[\cut{u-1}(A_i^{(x)})=\cut{u-1}(A_i)\cap\pruTrees{x,k_i},\]
thus by \eqref{eq:st1} again, 
\begin{align*}
\pruP{u-1,k_i}(A_i)=
\frac{\gwTr{u-1}{1,1}}{\gwTr{u-1}{1,k_i}}
\cdot\pi_{k_i}
\stP{k_i}(\cut{u-1}(A_i)).
\end{align*}
Put this back into \eqref{eq:st3}, and we get \eqref{eq:st2}.
\end{proof}

This enables us to give further descriptions of $\stP{k}$. Recall that $\gnH(\gnT)$ is the height of a tree, then
\begin{cor}\label{cor:branching_st}
Under \eqref{eq:assumption_gw}, for any $r,k\ge 2$, if $\gwf^{(r)}(\gwq)<\infty$, then
\begin{align*}
&\lim_{u\rightarrow\infty}
\frac{\stP{k}\parsof*{\gnZ{1}(\gnT)=r}{\gnH(\gnT)=u}}{\gwTr{u}{1,1}^{r-2}}\\
=&\lim_{u\rightarrow\infty}
\frac{\stP{k}\parsof*{\gnZ{1}(\gnT)\ge r}{\gnH(\gnT)=u}}{\gwTr{u}{1,1}^{r-2}}
=\frac{2\gwgamma^{2-r}}{r!}\frac{\gwf^{(r)}(\gwq)}{\gwf''(\gwq)}
\frac{\sum_{\substack{k_1, k_2, \cdots, k_r \ge 1\\ k_1 + \dotsb + k_r = k}}
\pi_{k_1} \dotsb \pi_{k_r}}
{\sum_{\substack{k_1, k_2\ge 1\\ k_1 + k_2 = k}}
\pi_{k_1} \pi_{k_2}}.
\end{align*}
\end{cor}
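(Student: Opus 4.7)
The plan is to apply the branching formula \eqref{eq:st2} of \Cref{pp:st} with $A_i=\pruTrees{u-1,k_i}$, which under $\stP{k}$ precisely picks out the event $\{\gnH(\gnT)=u\}$: since $\stP{k}$ is supported on cut trees whose leaves all sit in the last generation, requiring $\gnH(\gnT)=u$ forces every subtree $\subtree{\gnT}{i}$ to be a pruned tree of height $u-1$ with $k_i$ leaves. With the natural convention $\stP{1}=\delta_{\{\varnothing\}}$ and $\pi_1=1$, the set $\cut{u-1}(\pruTrees{u-1,k_i})$ coincides with the set of cut trees with $k_i$ leaves and height at most $u-1$, so $\stP{k_i}(\cut{u-1}(\pruTrees{u-1,k_i}))=\stP{k_i}(\gnH(\gnT)\le u-1)$.

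Summing \eqref{eq:st2} over compositions $k_1+\cdots+k_r=k$ with $k_i\ge 1$ (which forces $r\le k$, making every sum over $r$ finite), I obtain
\begin{equation*}
\stP{k}(\gnZ{1}(\gnT)=r,\gnH(\gnT)=u)=\frac{\gwTr{u-1}{1,1}^r}{\gwTr{u}{1,1}}\cdot\frac{\gwf^{(r)}(\gwq_{u-1})}{r!\,\pi_k}\sum_{\substack{k_1+\cdots+k_r=k\\ k_i\ge 1}}\pi_{k_1}\cdots\pi_{k_r}\prod_{i=1}^r\stP{k_i}(\gnH(\gnT)\le u-1).
\end{equation*}
Now letting $u\to\infty$: each $\stP{k_i}(\gnH(\gnT)\le u-1)\to 1$ because $\stP{k_i}$ is a probability measure on finite trees (\Cref{pp:st}); $\gwf^{(r)}(\gwq_{u-1})\to\gwf^{(r)}(\gwq)$ by the hypothesis $\gwf^{(r)}(\gwq)<\infty$; and by \Cref{pp:ratio}(2), $\gwTr{u-1}{1,1}/\gwTr{u}{1,1}\to 1/\gwgamma$ with $\gwTr{u-1}{1,1}\to 0$ in all three regimes.

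For the denominator, $\stP{k}(\gnH(\gnT)=u)=\sum_{r=2}^k\stP{k}(\gnZ{1}(\gnT)=r,\gnH(\gnT)=u)$ is a finite sum dominated by its $r=2$ term, giving
\begin{equation*}
\stP{k}(\gnH(\gnT)=u)\sim\frac{\gwTr{u-1}{1,1}^2}{\gwTr{u}{1,1}}\cdot\frac{\gwf''(\gwq)}{2\pi_k}\sum_{\substack{k_1+k_2=k\\ k_i\ge 1}}\pi_{k_1}\pi_{k_2}.
\end{equation*}
Dividing the two asymptotics, the $\pi_k^{-1}$ and $\gwTr{u}{1,1}^{-1}$ factors cancel, and the power $\gwTr{u-1}{1,1}^{r-2}/\gwTr{u}{1,1}^{r-2}$ converges to $\gwgamma^{2-r}$, yielding the announced constant. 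For the tail $\{\gnZ{1}(\gnT)\ge r\}$ the same argument applies to the finite sum $\sum_{s=r}^k\stP{k}(\gnZ{1}(\gnT)=s,\gnH(\gnT)=u)$, which is dominated as $u\to\infty$ by its smallest-index term $s=r$, producing the identical limit.

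I expect the main subtlety to be the identification of $\cut{u-1}(\pruTrees{u-1,k_i})$ and the correct treatment of the $k_i=1$ boundary (where the subtree degenerates to a single path and the formula only makes sense with the conventions $\pi_1=1$ and $\stP{1}=\delta_{\{\varnothing\}}$); once these are in place, everything reduces to asymptotic bookkeeping supplied by \Cref{pp:ratio} together with the finiteness of the sums over $r$ and $(k_i)$.
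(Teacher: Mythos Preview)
Your route via the branching formula \eqref{eq:st2} is different from the paper's, which is essentially a one-liner: by \eqref{eq:st1} one has
\[
\stP{k}\parsof*{\gnZ{1}(\gnT)=r}{\gnH(\gnT)=u}=\frac{\pruP{u,k}(\gnZ{1}(\gnT)=r)}{\pruP{u,k}(\gnZ{1}(\gnT)\ge 2)},
\]
and both numerator and denominator are read off from \Cref{cor:pru_r} (with the given $r$, respectively with $r=2$). Your approach instead produces an exact expression for $\stP{k}(\gnZ{1}(\gnT)=r,\gnH(\gnT)=u)$ from \eqref{eq:st2} and extracts asymptotics term by term; this is more explicit and your handling of the $k_i=1$ boundary is a genuine clarification of how \eqref{eq:st2} should be read there, but it is also more work.

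There is one step that is not fully justified. For the denominator (and likewise for the tail $\{\gnZ{1}(\gnT)\ge r\}$) you assert that the finite sum over $s$ is dominated by its lowest-index term because each extra subtree contributes a factor $\gwTr{u-1}{1,1}\to 0$. But your exact formula for the $s$-th term also carries the factor $\gwf^{(s)}(\gwq_{u-1})$, and under \eqref{eq:assumption_gw} alone, when $\gwq=1$ (the critical and subcritical regimes), only $\gwf''(1)<\infty$ is guaranteed; for $s\ge 3$ the derivative $\gwf^{(s)}(1)$ may be infinite, so $\gwf^{(s)}(\gwq_{u-1})$ can diverge as $u\to\infty$. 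It is still true that $\gwTr{u-1}{1,1}^{s-2}\gwf^{(s)}(\gwq_{u-1})\to 0$ under the finite-variance hypothesis (for instance, split the series defining $\gwf^{(s)}$ at $\ell\asymp(1-\gwq_{u-1})^{-1}$ and use $\sum_\ell \ell^2\gwCh(\ell)<\infty$), but this is not the immediate consequence of $\gwTr{u-1}{1,1}\to 0$ that you suggest. The paper sidesteps the issue entirely: the $\{\gnZ{1}(\gnT)\ge 2\}$ (resp.\ $\{\gnZ{1}(\gnT)\ge r\}$) estimate of \Cref{cor:pru_r} handles the denominator (resp.\ the tail) directly, and requires only $\gwf''(\gwq)<\infty$ (resp.\ $\gwf^{(r)}(\gwq)<\infty$). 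Invoking that result, or \Cref{cor:st_height} for the denominator, would close your argument cleanly.
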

\begin{proof}
By \eqref{eq:st1},
\begin{align*}
\stP{k}\parsof*{\gnZ{1}(\gnT)=r}{\gnH(\gnT)=u}=
\frac{\pruP{u,k}({\gnZ{1}(\gnT)=r})}
{\pruP{u,k}({\gnZ{1}(\gnT)\ge 2})},
\end{align*}
and we apply \Cref{cor:pru_r}. Changing $\gnZ{1}(\gnT)=r$ to $\gnZ{1}(\gnT)\ge r$ is idem.
\end{proof}

\begin{cor}\label{cor:st_height}

Under \eqref{eq:assumption_gw}, fix any $k\ge 2$.
\begin{enumerate}
\item For any $u\ge 1$,
\[
\stP{k}(\gnH(\gnT)=u)
=\frac{1}{\pi_k}\pars*{
\frac{\gwTr{u}{1,k}}{\gwTr{u}{1,1}}-\frac{\gwTr{u-1}{1,k}}{\gwTr{u-1}{1,1}}}.
\]
\item 
\begin{align*}
\lim_{u\rightarrow\infty}\frac{\stP{k}(\gnH(\gnT)=u)}{\gwTr{u}{1,1}}
=
\gwgamma^{-2} \frac{\gwf''(\gwq)}{2}
\frac{\sum_{1\le i\le k-1}
\pi_{i}\pi_{k-i}}{\pi_k}
.
\end{align*}
\end{enumerate}
\end{cor}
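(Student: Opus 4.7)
I would start from the explicit description \eqref{eq:st1} of $\stP{k}$ and observe that, within the support $\bigcup_{v\ge 1}\pruTrees{v,k}\cap\setof{\gnT}{\gnZ{1}(\gnT)\ge 2}$, the event $\braces{\gnH(\gnT)=u}$ reduces to $\pruTrees{u,k}\cap\setof{\gnT}{\gnZ{1}(\gnT)\ge 2}$, so only the $v=u$ summand in \eqref{eq:st1} survives and
\[
\stP{k}(\gnH(\gnT)=u)=\frac{1}{\pi_k}\frac{\gwTr{u}{1,k}}{\gwTr{u}{1,1}}\pruP{u,k}\pars*{\gnZ{1}(\gnT)\ge 2}.
\]
Then I would compute $\pruP{u,k}(\gnZ{1}(\gnT)\ge 2)=1-\pruP{u,k}(\gnZ{1}(\gnT)=1)$, evaluating the subtracted term by applying \Cref{cor:pruHeight} with smaller height $u-1$ and $A=\pruTrees{u-1,k}$, which gives
\[
\pruP{u,k}(\gnZ{1}(\gnT)=1)=\frac{\gwTr{u}{1,1}}{\gwTr{u}{1,k}}\cdot\frac{\gwTr{u-1}{1,k}}{\gwTr{u-1}{1,1}}.
\]
Substituting back and simplifying algebraically produces the telescoping expression claimed in Part 1.

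\textbf{Plan for Part 2.} Using the intermediate identity from the proof of Part 1, I would divide by $\gwTr{u}{1,1}$ to write
\[
\frac{\stP{k}(\gnH(\gnT)=u)}{\gwTr{u}{1,1}}=\frac{1}{\pi_k}\cdot\frac{\gwTr{u}{1,k}}{\gwTr{u}{1,1}}\cdot\frac{\pruP{u,k}(\gnZ{1}(\gnT)\ge 2)}{\gwTr{u}{1,1}}.
\]
Part 1 of \Cref{pp:ratio} ensures the first ratio tends to $\pi_k$, while \Cref{cor:pru_r} with $r=2$ ensures the second ratio tends to $\gwgamma^{-2}\frac{\gwf''(\gwq)}{2}\sum_{k_1+k_2=k,\,k_i\ge 1}\frac{\pi_{k_1}\pi_{k_2}}{\pi_k}$. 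Multiplying the two limits together and cancelling one factor of $\pi_k$ yields the claimed expression. The hypothesis $\gwf''(\gwq)<\infty$ needed to invoke \Cref{cor:pru_r} with $r=2$ is free here, since the finite-variance assumption in \eqref{eq:assumption_gw} already gives $\gwf''(1)<\infty$, hence $\gwf''(\gwq)\le\gwf''(1)<\infty$ whether $\gwq<1$ or $\gwq=1$.

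\textbf{Expected obstacle.} There is no serious analytic obstacle: both parts reduce to bookkeeping on top of the already-established identities \eqref{eq:st1}, \Cref{cor:pruHeight} and \Cref{cor:pru_r}. The only points requiring care are (i) correctly identifying that the height-$u$ event intersects the support of $\stP{k}$ in just the single slice $\pruTrees{u,k}\cap\setof{\gnT}{\gnZ{1}(\gnT)\ge 2}$, so that the infinite sum in \eqref{eq:st1} collapses to one term, and (ii) checking the applicability of \Cref{cor:pru_r} with $r=2$ under assumption \eqref{eq:assumption_gw}, as noted above. An alternative route would be to substitute the sharp asymptotics from Parts 3 or 4 of \Cref{pp:ratio} into the Part 1 formula directly, but the route through \Cref{cor:pru_r} is cleaner and treats both the $\mean=1$ and $\mean\ne 1$ cases uniformly.
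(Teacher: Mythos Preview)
Your proposal is correct and follows essentially the same route as the paper: for Part~1 you collapse \eqref{eq:st1} to the single height-$u$ slice and then use \Cref{cor:pruHeight} to evaluate $\pruP{u,k}(\gnZ{1}=1)$, and for Part~2 you feed the intermediate identity into \Cref{cor:pru_r} with $r=2$ together with $\gwTr{u}{1,k}/\gwTr{u}{1,1}\to\pi_k$. The only addition beyond the paper's argument is your explicit check that $\gwf''(\gwq)<\infty$ follows from \eqref{eq:assumption_gw}, which is a welcome clarification.
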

\begin{proof}
\begin{enumerate}
\item
Take $A=\pruTrees{u,k}\cap\setof{\gnT}{\gnZ{1}(\gnT)\ge 2}$ in \Cref{pp:st}, we have that
\begin{align*}
&\stP{k}(\gnH(\gnT)=u)\\
=&\stP{k}(\pruTrees{u,k}\cap\setof{\gnT}{\gnZ{1}(\gnT)\ge 2})\\
=&\frac{1}{\pi_k}\frac{\gwTr{u}{1,k}}{\gwTr{u}{1,1}}
\pruP{u,k}({\gnZ{1}(\gnT)\ge 2})\\
=&\frac{1}{\pi_k}\frac{\gwTr{u}{1,k}}{\gwTr{u}{1,1}}
\pars*{1-\pruP{u,k}({\gnZ{1}(\gnT)=1})},
\end{align*}
then we use \Cref{cor:pruHeight} to conclude that
\begin{align*}
\frac{1}{\pi_k}\frac{\gwTr{u}{1,k}}{\gwTr{u}{1,1}}
\pars*{1-\pruP{u,k}({\gnZ{1}(\gnT)=1})}
=&\frac{1}{\pi_k}\frac{\gwTr{u}{1,k}}{\gwTr{u}{1,1}}
\pars*{1-\frac{\gwTr{u-1}{1,k}}{\gwTr{u-1}{1,1}}\frac{\gwTr{u}{1,1}}{\gwTr{u}{1,k}}}\\
=&\frac{1}{\pi_k}
\pars*{\frac{\gwTr{u}{1,k}}{\gwTr{u}{1,1}}-\frac{\gwTr{u-1}{1,k}}{\gwTr{u-1}{1,1}}}.
\end{align*}
\item
In the proof of Part 1, we deduced that
\[
\stP{k}(\gnH(\gnT)=u)
=\frac{1}{\pi_k}\frac{\gwTr{u}{1,k}}{\gwTr{u}{1,1}}
\pruP{u,k}({\gnZ{1}(\gnT)\ge 2}),
\]
and the conclusion follows from \Cref{cor:pru_r} with $r=2$. 
\end{enumerate}
\end{proof}
\begin{rmk}\label{rmk:ratio}
As a byproduct of \Cref{cor:st_height}, 
we have that
\[
\frac{1}{\pi_k}\pars*{
\frac{\gwTr{u}{1,k}}{\gwTr{u}{1,1}}-\frac{\gwTr{u-1}{1,k}}{\gwTr{u-1}{1,1}}}
=(1+o(1))\gwgamma^{-2} \frac{\gwf''(\gwq)}{2}
\frac{\sum_{1\le i\le k-1}
\pi_{i}\pi_{k-i}}{\pi_k}
\gwTr{u}{1,1}.
\]
Together with the asymptotic of $\gwTr{u}{1,1}$ in \Cref{pp:ratio}, we deduce that
\[
\frac{\gwTr{u}{1,k}}{\gwTr{u}{1,1}}-\frac{\gwTr{u-1}{1,k}}{\gwTr{u-1}{1,1}}
=
\begin{cases}
(C_4+o(1))u^{-2},        & \mean=1, \\
(C_5+o(1))\gwgamma^{u},  & \mean\ne 1,
\end{cases}
\]
where
\begin{align*}
C_4=
\gwgamma^{-2} \frac{\gwf''(\gwq)\sum_{1\le i\le k-1}\pi_{i}\pi_{k-i}}
{\sigma^2\sum_{i=1}^\infty \pi_i (\gwCh(0))^i},\,
C_5=\frac{1}{2}\gwgamma^{-2} v_1{\gwf''(\gwq)\sum_{1\le i\le k-1}\pi_{i}\pi_{k-i}},
\end{align*}
with $\sigma^2,v_1$ defined in \Cref{pp:ratio}.
\end{rmk}

\section{Application to branching random walks}\label{sec:spatial}
As we shall deal with trees without fixed heights in this section, we abbreviate $\range{}(\gnT)=\range{\gnH(\gnT)}(\gnT)$ and $\gap{}{i}(\gnT)=\gap{\gnH(\gnT)}{i}(\gnT)$.
By \eqref{eq:operation_nature} and \Cref{pp:st}, for the span we have that
\begin{equation}\label{eq:PGWtoPST}
\begin{aligned}
&\lim_{n\rightarrow\infty}\bgwP\parsof*{\range{n}>x}{\gnZ{n}=k}\\
=&\lim_{n\rightarrow\infty}\bgwP\parsof*{\range{}(\cut{n}(\prune{n}\gnsT))>x}{\gnZ{n}=k}\\
=&\lim_{n\rightarrow\infty}\bstP{n,k}\pars*{\range{}(\gnsT)>x}=\bstP{k}\pars*{\range{}(\gnsT)>x},
\end{aligned}
\end{equation}
and it is idem for the gaps. In other words,
$\range{n}$ and $(\gap{n}{i})$ under $\bgwP\parsof{\cdot}{\gnZ{n}=k}$ converge to $\range{}(\gnsT)$ and $(\gap{}{i}(\gnsT))$ under $\bstP{k}(\cdot)$ as $n\rightarrow\infty$.
Thus to prove \Cref{thm:brw} it suffices to study the span and gaps under $\bstP{k}$.
\subsection{The span}
Take any tree $\gnsT\in\pruTrees{n,k}\cap\braces{\gnZ{1}(\gnsT)\ge 2}$ under $\bstP{k}$, 
we divide the span $\range{n}(\gnsT)$ into two parts: 
the span of the first (in lexicographical order) node in the last layer of each subtree 
is denoted by 
\[\rangeS{n}(\gnsT):=\text{the span of }\setof{1\le i\le\gnZ{1}(\gnsT)}{\gnV{\underbrace{i11\dotsb1}_{\text{length $n$}}}(\gnsT)},\]
and the maximum span among each subtree is denoted by
\[\rangeG{n}(\gnsT):=\max_{1\le i\le \gnZ{1}(\gnsT)}\braces{\range{n-1}(\subtree{\gnsT}{i})}.\] 

\begin{figure}[ht]
\centering
\includegraphics[scale=0.8]{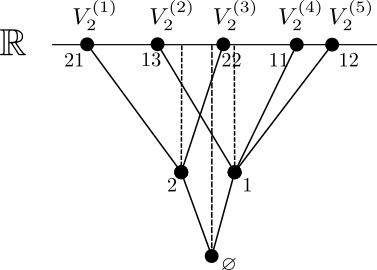}
\caption{A spatial tree.}
\label{fig3}
\end{figure}
For instance, the tree in \Cref{fig3} has height $n=2$ and $k=5$ particles in the last generation, with two subtrees at the first generation. For this tree we have  
$\rangeS{2}(\gnsT)=\gnV{2}^{(4)}(\gnsT)-\gnV{2}^{(1)}(\gnsT)$ 
and
$\rangeG{2}(\gnsT)=\max\braces{\gnV{2}^{(3)}(\gnsT)-\gnV{2}^{(1)}(\gnsT),\gnV{2}^{(5)}(\gnsT)-\gnV{2}^{(2)}(\gnsT)}$.

By the triangle inequality, we have that
\begin{equation}\label{eq:rangeRSG}
\rangeS{n}(\gnsT)\le\range{n}(\gnsT)\le\rangeS{n}(\gnsT)+2\rangeG{n}(\gnsT).
\end{equation}

For simplicity, since trees under $\bstP{k}$ do not have a fixed height, 
we write $\rangeS{}(\gnsT),\,\rangeG{}(\gnsT),\,\range{}(\gnsT)$ for $\rangeS{\gnH(\gnsT)}(\gnsT),\,\rangeG{\gnH(\gnsT)}(\gnsT),\,\range{\gnH(\gnsT)}(\gnsT)$.

\begin{lem}\label{lm:rangeS}
Fix $k\ge 2$. Under the conditions \eqref{eq:assumption_gw} and \eqref{eq:assumption_brw}, 
as $x\rightarrow\infty$, 
\[
\bstP{k}\pars*{\rangeS{}(\gnsT)>x}=
\begin{cases}
(C_1+o(1))x^{-2}, &\mean=1,\\
\exp(-(C_2+o(1))x),&\mean\ne 1,
\end{cases}
\]
where
\[
C_1=
\frac{\gwf''(\gwq)}{\gwgamma^2\sigma^2}
\frac{\sum_{1\le i\le k-1}
\pi_{i}\pi_{k-i}}{\pi_k\sum_{i=1}^\infty \pi_i (\gwCh(0))^i},
\]
\[
C_2=\inf_{s\in(0,\infty)}\pars*{-s\log \gamma+\sup_{t\in\mathbb R}(t-s\Lambda(t))},
\]
and all parameters appearing in the $C_1,C_2$ are those in \Cref{pp:ratio} and \Cref{lm:cremer}.
\end{lem}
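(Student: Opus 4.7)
\bigskip
\noindent\textbf{Proof plan.}
The strategy is to decompose the event $\{\rangeS{}(\gnsT)>x\}$ according to the height $\gnH(\gnsT)=u$ and the number of subtrees $\gnZ{1}(\gnsT)=r$ at the first generation. Under $\bstP{k}$, conditional on $\gnH=u$ and $\gnZ{1}=r$, the $r$ positions $\gnV{i11\dotsb 1}(\gnsT)$ (for $1\le i\le r$) are the endpoints of $r$ independent random walks of length $u$ with step distribution $\brwD$, so $\rangeS{}(\gnsT)$ is exactly the range of those $r$ independent walks. Combined with \Cref{cor:branching_st} and \Cref{cor:st_height}, this reduces the whole question to two one-dimensional analytic problems: controlling $\bstP{k}(\gnH=u)$ on the tree side, and the tail of the range of a small number of i.i.d.\ random walks on the spatial side. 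A key simplification is that, by \Cref{cor:branching_st}, $\bstP{k}(\gnZ{1}\ge 3\mid\gnH=u)=O(\gwTr{u}{1,1})\to 0$, so as $u\to\infty$ the dominant contribution comes from $\gnZ{1}=2$, where $\rangeS{}(\gnsT)=|\gnV{111\dotsb1}-\gnV{211\dotsb1}|$ is the absolute difference of two i.i.d.\ random walks of length $u$.

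In the critical case $\mean=1$, I would combine \Cref{cor:st_height}(2) with \Cref{pp:ratio}(3) to get $\bstP{k}(\gnH=u)\sim C_1 u^{-2}$. On the spatial side, the Berry--Esseen part of \Cref{lm:cremer}(1) gives
\[
\bgwP\parsof*{\abs{\gnV{111\dotsb1}-\gnV{211\dotsb1}}>x}{\gnH=u,\gnZ{1}=2}
=2\Phi\pars*{-\tfrac{x}{\sqrt{2u}}}+O(u^{-1/2}).
\]
Writing $u=x^2 v$, the sum $\sum_u \bstP{k}(\gnH=u)\bgwP(\rangeS{}>x\mid\gnH=u,\gnZ{1}=2)$ concentrates on the regime $u\asymp x^2$ and, after the change of variables, reduces to an explicit Gaussian integral that produces the $x^{-2}$ decay with coefficient proportional to $C_1$. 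The remaining contributions are easily shown to be $o(x^{-2})$: the $r\ge 3$ terms lose an extra factor $\gwTr{u}{1,1}=O(u^{-2})$ by \Cref{cor:branching_st}; the Berry--Esseen error integrates to $o(x^{-2})$; and small $u$ (say $u\le x^{3/2}$) contribute negligibly because $\Phi(-x/\sqrt{2u})$ is super-polynomially small there.

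In the non-critical case $\mean\ne 1$, \Cref{cor:st_height}(2) combined with \Cref{pp:ratio} gives $\bstP{k}(\gnH=u)\sim C\gwgamma^u$ with $\gwgamma<1$, so $\gnH$ has exponential tails. Cram\'er's theorem (\Cref{lm:cremer}(2)) yields $\bgwP(\rangeS{}>x\mid\gnH=u,\gnZ{1}=2)\approx\exp(-u\Lambda^*(x/u))$ where $\Lambda^*(y)=\sup_t(ty-\Lambda(t))$. Writing $u=sx$, the log of the joint contribution is $-x[-s\log\gwgamma+\sup_t(t-s\Lambda(t))]+o(x)$, and optimizing over $s\in(0,\infty)$ yields precisely the exponent $-C_2 x$. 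I expect the technical heart of the argument to be here: one must show, on the one hand, a lower bound by restricting to an $s$-window around the optimizer $s^*$, and on the other hand, a matching upper bound by splitting the sum over $u$ into a finite number of regimes and invoking the Cram\'er estimate uniformly (for which the finite exponential moment hypothesis in \eqref{eq:assumption_brw} is essential, since $\Lambda$ must be finite throughout $\mathbb{R}$). A secondary point is to verify that the optimizer $s^*$ lies in $(0,\infty)$ and that the $\gnZ{1}\ge 3$ terms, although weighted by the same $\gwgamma^u$, contribute only an $\exp(-C_2 x)$ correction with no change to the leading exponent.
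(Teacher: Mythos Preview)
Your proposal is correct and follows essentially the same route as the paper: decompose over $\gnH(\gnsT)=u$, use \Cref{cor:branching_st} with the union bound $\rangeR(m,u,x)\le m^2\rangeR(2,u,x)$ to reduce to $\gnZ{1}=2$, then plug in \Cref{cor:st_height} together with \Cref{pp:ratio} for the height distribution and \Cref{lm:cremer} for the spatial tail, splitting at $u\sim x^{3/2}$ in the critical case and optimizing $u=sx$ via Cram\'er in the non-critical case. The paper also records the endpoint check you mention, namely that the function $s\mapsto -s\log\gwgamma+\sup_t(t-s\Lambda(t))$ tends to $+\infty$ as $s\to 0^+$ and $s\to\infty$, so the infimum is attained on a compact interval.
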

\begin{proof}
Denote by $\gnRV_i^{(j)}$ independent random variables distributed as $\brwD$ for all $i,j\in\mathbb N$. Denote 
\[
\rangeR(m,u,x):=\gnP\pars*{
\max_{1\le a,b\le m}\braces*{\sum_{i=1}^u \gnRV_i^{(a)}-\sum_{j=1}^u\gnRV_j^{(b)}}
>x},
\]
then
\begin{align*}
\bstP{k}(\rangeS{}(\gnsT)>x)&=\bstE{k}(\rangeR(\gnZ{1}(\gnsT),\gnH(\gnsT),x))\\
&=\sum_{u\ge 1}\bstP{k}(\gnH(\gnsT)=u)\bstE{k}\parsof*{\rangeR(\gnZ{1}(\gnsT),u,x)}{\gnH(\gnsT)=u}.
\end{align*}
Moreover, by the union bound, 
\[
\rangeR(m,u,x)\le m^2\rangeR(2,u,x),
\]
thus
\begin{equation}\label{eq:lem41}
\begin{aligned}
&\bstE{k}(\rangeR(\gnZ{1}(\gnsT),\gnH(\gnsT),x))
-\sum_{u\ge 1}\bstP{k}(\gnH(\gnsT)=u)\rangeR(2,u,x)\\
\le&\sum_{u\ge 1}\bstP{k}(\gnH(\gnsT)=u)
\cdot{k^2\rangeR(2,u,x)\bstP{k}\parsof*{\gnZ{1}(\gnsT)\ge 3}{\gnH(\gnsT)=u}}.
\end{aligned}\end{equation}
Then by \Cref{cor:branching_st}, the error term in \eqref{eq:lem41} is negligible, so
\[
\bstP{k}(\rangeS{}(\gnsT)>x)=\bstE{k}(\rangeR(\gnZ{1}(\gnsT),\gnH(\gnsT),x))
=(1+o(1))\sum_{u\ge 1}\bstP{k}(\gnH(\gnsT)=u)\rangeR(2,u,x).
\]
Thus it suffices to show that 
\[
\sum_{u\ge 1}\bstP{k}(\gnH(\gnsT)=u)\rangeR(2,u,x)=
\begin{cases}
(C_1+o(1))x^{-2}, &\mean=1,\\
\exp(-(C_2+o(1))x),&\mean\ne 1.
\end{cases}
\]

If $\mean=1$, by Part 2 of \Cref{cor:st_height}, Part 3 of \Cref{pp:ratio} and Part 1 of \Cref{lm:cremer}, we have that
\begin{align*}
&\sum_{u\ge 1}\bstP{k}(\gnH(\gnsT)=u)\rangeR(2,u,x)\\
=&\sum_{u>x^{3/2}}\frac{C_1+o(1)}{u^2}
\left[2\left(1-\Phi\left(\frac{x}{\sqrt {2u}}\right)\right)
+O\left(\frac{1}{\sqrt u}\right)\right]
 +{\sum_{u\le x^{3/2}}o\pars*{\frac{1}{u^2}\cdot\frac{u}{x^2\log x}}}\\
=&(2C_1+o(1))\int_{x^{3/2}}^\infty\frac{1-\Phi(x/\sqrt{2y})}{y^2}dy+o(x^{-2})\\
=&\frac{2C_1+o(1)}{x^2}\int_0^\infty\frac{1-\Phi(1/\sqrt{2z})}{z^2}dz=\frac{2C_1+o(1)}{x^2}.
\end{align*}

If $\mean\ne1$, similarly,
we can choose suitable constants $C,s_1,s_2$ such that
\begin{align*}
&\sum_{u\ge 1}\bstP{k}\pars{\gnH(\gnsT)=u}\rangeR(2,u,x)\\
=&\sum_{s_1x<u<s_2x}{(C+o(1))}{\gamma^{u}}
{e^{-(1+o(1))\sup_{t\in\mathbb R}(tx-u\Lambda(t))}}
+{O\pars*{R(2,s_1x,x)+\gamma^{s_2x}}}\\
=&e^{-(C_2+o(1))x}.
\end{align*}

We remark that since
\begin{align*}
\lim_{s\rightarrow+\infty}\pars*{-s\log \gamma+\sup_{t\in\mathbb R}(t-s\Lambda(t))}
&\ge \lim_{s\rightarrow+\infty}\pars*{-s\log \gamma+(0-s\Lambda(0))}\\
&=\lim_{s\rightarrow+\infty}\pars*{-s\log \gamma}=+\infty,
\end{align*}
and 
\begin{align*}
\lim_{s\rightarrow0+}\pars*{-s\log \gamma+\sup_{t\in\mathbb R}(t-s\Lambda(t))}
&= \lim_{s\rightarrow0+}\sup_{t\in\mathbb R}(t-s\Lambda(t))=+\infty,
\end{align*}
the infimum over $(0,\infty)$ in $C_2$ is equivalent to the infimum among a bounded interval $[\epsilon,\epsilon^{-1}]$.
\end{proof}
\begin{prop}\label{pp:rangeR}
Fix $k\ge 2$. Under the conditions \eqref{eq:assumption_gw} and \eqref{eq:assumption_brw}, 
as $x\rightarrow\infty$, 
\[
\bstP{k}\pars*{\range{}(\gnsT)>x}=
\begin{cases}
(C_1+o(1))x^{-2}, &\mean=1,\\
\exp(-(C_2+o(1))x),&\mean\ne 1,
\end{cases}
\]
where $C_1,C_2$ are those in \Cref{lm:rangeS}.
\end{prop}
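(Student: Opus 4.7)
The plan is to exploit the triangle-type inequality \eqref{eq:rangeRSG}, namely $\rangeS{}(\gnsT) \le \range{}(\gnsT) \le \rangeS{}(\gnsT) + 2\rangeG{}(\gnsT)$, to reduce the tail of $\range{}$ to that of $\rangeS{}$ already computed in \Cref{lm:rangeS}. The lower bound is immediate: $\range{} \ge \rangeS{}$ combined with \Cref{lm:rangeS} gives $\bstP{k}(\range{}(\gnsT) > x) \ge (C_1 + o(1))x^{-2}$ in the critical case, and $\bstP{k}(\range{}(\gnsT) > x) \ge \exp(-(C_2 + o(1))x)$ otherwise.

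For the upper bound, I would proceed by induction on $k \ge 2$. The base case $k = 2$ is trivial, since under $\bstP{2}$ one necessarily has $\gnZ{1}(\gnsT) = 2$ with $(k_1,k_2)=(1,1)$; both subtrees reduce to a single leaf, so $\rangeG{}(\gnsT) \equiv 0$ and $\range{}(\gnsT) = \rangeS{}(\gnsT)$ almost surely. For the inductive step with $k \ge 3$, I would fix $\delta > 0$ and split
\[
\bstP{k}(\range{}(\gnsT) > x) \le \bstP{k}\pars*{\rangeS{}(\gnsT) > (1-2\delta)x} + \bstP{k}\pars*{\rangeG{}(\gnsT) > \delta x}.
\]
The first term is handled by \Cref{lm:rangeS} and contributes, to leading order, $(C_1 + o(1))(1-2\delta)^{-2}x^{-2}$ critically and $\exp(-(C_2+o(1))(1-2\delta)x)$ otherwise, recovering the target constant (resp.\ rate) after letting $\delta \to 0$ at the end.

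To bound the second term I would use the self-similar decomposition \eqref{eq:st2}: conditionally on $\gnZ{1}(\gnsT) = r$ and child-sizes $(k_1,\ldots,k_r)$, the cut subtrees $\cut{\gnH(\gnsT)-1}(\subtree{\gnsT}{i})$ are independent with laws $\bstP{k_i}$. Since a global translation does not affect the span at a given generation, $\range{}(\subtree{\gnsT}{i})$ under $\bstP{k}$ has the same distribution as $\range{}$ under $\bstP{k_i}$. Because each $k_i \le k-1$, the induction hypothesis supplies the tail asymptotic for $\range{}(\subtree{\gnsT}{i})$, and a union bound over the (bounded-in-expectation) number of subtrees yields a preliminary estimate for $\bstP{k}(\rangeG{}(\gnsT) > \delta x)$.

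The principal obstacle is that this direct inductive bound is of the \emph{same order} as the target, namely $O(x^{-2})$ critically or $O(\exp(-C_2 \delta x))$ otherwise, which is too crude to recover the sharp leading constant. To close the gap I would follow the template of the proof of \Cref{lm:rangeS}: condition on the height $\gnH(\gnsT) = u$ using \Cref{cor:st_height}, sum over $u$ against the random-walk asymptotics from \Cref{lm:cremer}, and isolate the dominant configuration $\gnZ{1}(\gnsT) = 2$ via \Cref{cor:branching_st}. Within this stratum a finer joint analysis should show that the event $\{\range{} > x\}$ is driven by $\{\rangeS{} > x\}$, the extra $\rangeG{}$-contribution being concentrated in an atypical height regime whose weighted probability is genuinely $o(x^{-2})$ (resp.\ $o(\exp(-C_2 x))$); this matches the lower bound and completes the argument.
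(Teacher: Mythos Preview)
Your framework via \eqref{eq:rangeRSG} and the lower bound are fine and match the paper. The gap is in the upper bound: as you yourself concede, the induction on $k$ with threshold $\delta x$ produces a same-order contribution from $\rangeG{}$, and your final paragraph does not actually supply the missing argument---it only restates the goal (``a finer joint analysis should show\ldots'') without identifying what makes $\rangeG{}$ genuinely smaller.

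The paper bypasses induction entirely. The two concrete ingredients you are missing are these. First, the correct threshold is $x^{1-\epsilon}$, not $\delta x$: from $\bstP{k}(\range{}>x)\le\bstP{k}(\rangeS{}>x-2x^{1-\epsilon})+\bstP{k}(\rangeG{}>x^{1-\epsilon})$ one recovers the exact leading asymptotic of $\rangeS{}$ with no loss in the constant, provided one shows $\bstP{k}(\rangeG{}>x^{1-\epsilon})=o\bigl(\bstP{k}(\rangeS{}>x)\bigr)$. Second, and this is the substantive point, the reason this last estimate holds is a \emph{height} bound on the cut subtrees, not a span bound. Set $\tilde{\gnH}(\gnsT)=\max_{i}\gnH(\cut{}(\subtree{\gnsT}{i}))$; then $\rangeG{}$ is controlled by at most $k^2$ random-walk differences of length at most $\tilde{\gnH}$, and summing \eqref{eq:st2} over all configurations (with \Cref{cor:st_height} applied to each $\bstP{k_i}$) yields $\bstP{k}(\tilde{\gnH}=u)\lesssim\gwTr{u}{1,1}\,\bstP{k}(\gnH=u)$. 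Intuitively, for a cut subtree to reach height $u$ one needs a \emph{second} branching at distance $u$ from the leaves, and this costs an extra factor $\gwTr{u}{1,1}$ by the same mechanism as in \Cref{cor:st_height}. That additional decay (polynomial when $\mean=1$, exponential otherwise) is exactly what makes $\sum_u\bstP{k}(\tilde{\gnH}=u)\,\rangeR(2,u,x^{1-\epsilon})$ of strictly smaller order than $\sum_u\bstP{k}(\gnH=u)\,\rangeR(2,u,x)$. Your inductive route compares spans rather than heights and therefore never isolates $\tilde{\gnH}$, which is why it stalls at the same order.
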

\begin{proof}
By \eqref{eq:rangeRSG} and \Cref{lm:rangeS}, it suffices to show that
\begin{equation}\label{eq:rangeGoS}
\bstP{k}(\rangeG{}(\gnsT)>x^{1-\epsilon})=o(\bstP{k}(\rangeS{}(\gnsT)>x))
\end{equation}
for some $\epsilon>0$.

In fact, $\rangeG{}(\gnsT)$ is determined by the structures of $\cut{}(\subtree{\gnsT}{i})$.
If we denote by
\[
\tilde{\gnH}(\gnsT):=\max_{1\le i\le\gnZ{1}(\gnsT)}\gnH(\cut{}(\subtree{\gnsT}{i})),
\]
then by the union bound, $\rangeG{}(\gnsT)$ is determined by 
at most $k^2$ pairs of nodes within the same subtrees, in other words,
\begin{align*}
\bstP{k}(\rangeG{}(\gnsT)>x^{1-\epsilon}) 
&\le 
k^2\bstE{k}\bracks*{\max_{1\le u\le\tilde{\gnH}(\gnsT)}\rangeR(2,u,x^{1-\epsilon})}\\
&\asymp \sum_{u\ge 1}\bstP{k}(\tilde{\gnH}(\gnsT)=u)\rangeR(2,u,x^{1-\epsilon}).
\end{align*}
Sum over all possible cases by \eqref{eq:st2}, notice that $r$ and $(k_i)$ can only take integer values at most $k$, we have that 
\[
\bstP{k}(\tilde{\gnH}(\gnsT)=u)\lesssim \gwTr{u}{1,1}\bstP{k}(\gnH(\gnsT)=u).
\]
To sum up,
\[
\bstP{k}(\rangeG{}(\gnsT)>x^{1-\epsilon}) 
\lesssim \sum_{u\ge 1}\gwTr{u}{1,1}\bstP{k}(\gnH(\gnsT)=u)\rangeR(2,u,x^{1-\epsilon}),
\]
while
\[
\bstP{k}(\rangeS{}(\gnsT)>x) 
\asymp \sum_{u\ge 1}\bstP{k}(\gnH(\gnsT)=u)\rangeR(2,u,x).
\]
Therefore, by the proof of \Cref{lm:rangeS}, we have the desired dominance in \eqref{eq:rangeGoS}.
\end{proof}
\subsection{The gaps}
\begin{prop}\label{pp:gap}
Let $k\ge 2$ and $1\le i\le k-1$. Under \eqref{eq:assumption_gw} and \eqref{eq:assumption_brw}, as $x\rightarrow\infty$,
\begin{align*}
\bstP{k}(\gap{}{i}(\gnsT)>x)
&=(C_3+o(1))\bstP{k}(\range{}(\gnsT)>x)\\
&=
\begin{cases}
(C_1C_3+o(1))x^{-2}, &\mean=1,\\
\exp(-(C_2+o(1))x),&\mean\ne 1,
\end{cases}
\end{align*}
where $C_3=\frac{\pi_i\pi_{k-i}}{\sum_{j=1}^{k-1}\pi_j\pi_{k-j}}$.
\end{prop}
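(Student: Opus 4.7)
The proof parallels that of \Cref{pp:rangeR}: we identify the leading-order structural event producing a macroscopic $i$-th gap and recognise $C_3$ as the associated combinatorial weight. For $\gap{}{i}(\gnsT)$ to exceed a large $x$, the $k$ particles of generation $\gnH(\gnsT)$ must separate into two spatial clumps at distance $>x$, with exactly $i$ of them in the lower clump. As in \Cref{lm:rangeS}, \Cref{cor:branching_st} gives $\stP{k}(\gnZ{1}(\gnsT)\ge 3\mid \gnH(\gnsT)=u)=O(\gwTr{u}{1,1})=o(1)$, and combined with the union bound $\rangeR(m,u,x)\le m^2 \rangeR(2,u,x)$ the contribution of $\{\gnZ{1}(\gnsT)\ge 3\}$ to $\stP{k}(\gap{}{i}>x)$ is negligible compared to $\stP{k}(\range{}>x)$. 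We may therefore restrict to $\gnZ{1}(\gnsT)=2$.

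Restricted to $\gnZ{1}(\gnsT)=2$ with ordered split sizes $(k_1,k_2)$, $k_1+k_2=k$, the computation in the proof of \Cref{cor:pru_r} specialised to $r=2$, combined with \eqref{eq:st1}, yields
\[
\lim_{u\to\infty}\stP{k}\parsof*{\gnZ{u-1}(\subtree{\gnsT}{j})=k_j,\,j=1,2}{\gnZ{1}(\gnsT)=2,\,\gnH(\gnsT)=u}
=\frac{\pi_{k_1}\pi_{k_2}}{\sum_{\ell=1}^{k-1}\pi_\ell\pi_{k-\ell}}.
\]
Given this split and a macroscopic separation $>x$, which subtree contains the lower clump is asymptotically uniform on $\{1,2\}$: both clumps are centred around independent spatial spines of length $u$ (sums of i.i.d.~$\brwD$'s), whose difference has a symmetric distribution, so the large deviation occurs in either direction with probability tending to $1/2$. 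The $i$-th gap is the macroscopic one precisely when the lower clump contains $i$ particles, that is, either $k_1=i$ with subtree $1$ lower, or $k_2=i$ with subtree $2$ lower. Summing over splits gives
\[
C_3=\frac{\pi_i\pi_{k-i}}{\sum_{j=1}^{k-1}\pi_j\pi_{k-j}},
\]
the case $i=k-i$ being covered by the single split $(i,i)$ contributing with weight $1$ in place of two weights of $\tfrac{1}{2}$.

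The first equality of the proposition then reads $\stP{k}(\gap{}{i}>x)=(C_3+o(1))\stP{k}(\range{}>x)$, after which the second equality follows by plugging in \Cref{pp:rangeR}. The main technical obstacle is controlling the microscopic within-subtree fluctuations: of order $\sqrt u$ in the critical case and exponentially small in the non-critical case, by \Cref{lm:cremer}. For the upper bound these are absorbed by $\rangeR(2,u,x)$-type estimates exactly as in \Cref{pp:rangeR}. For the lower bound, one must verify that in the dominant scenario --- split $(i,k-i)$ with the spines separated by $x(1+o(1))$ --- the realised macroscopic gap is truly $\gap{}{i}$ and is not reassigned to an adjacent index by intra-clump fluctuations; this mirrors the $\rangeG{}=o(\rangeS{})$ argument of \Cref{pp:rangeR}, with the additional bookkeeping of tracking \emph{which} gap index the macroscopic separation actually carries.
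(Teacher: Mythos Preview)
Your proposal is correct and follows essentially the same route as the paper: reduce to the binary split $\gnZ{1}(\gnsT)=2$ via \Cref{cor:branching_st}, identify the conditional split weights $\pi_{k_1}\pi_{k_2}/\sum_\ell\pi_\ell\pi_{k-\ell}$, use the spatial symmetry of the two spines for the factor $\tfrac12$, and control the intra-subtree fluctuations by the $\rangeG{}$-negligibility argument from \Cref{pp:rangeR}. The paper's write-up is terser (it invokes the two negligibility statements from the previous subsection directly and then computes via \eqref{eq:st2}), but the decomposition and the identification of $C_3$ are the same.
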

\begin{proof}
Recall that trees under $\bstP{k}$ have $k$ nodes in the last generation, and we write their positions in increasing order,
\[\gnV{}^{(1)}(\gnsT)\le\cdots\le\gnV{}^{(k)}(\gnsT).\]
In the previous section, we showed that 
\begin{align*}
&\bstP{k}\pars*{\gnZ{1}(\gnsT)\ge 3,\range{}(\gnsT)>x}=o(\bstP{k}(\range{}(\gnsT)>x)),\\
&\bstP{k}\pars*{\rangeG{}(\gnsT)>x^{1-\epsilon}}=o(\bstP{k}(\range{}(\gnsT)>x)),
\end{align*}
thus it suffices to consider the case where
\[
\braces{\gnV{}^{(1)}(\gnsT),\cdots,\gnV{}^{(i)}(\gnsT)}\text{ and }
\braces{\gnV{}^{(i+1)}(\gnsT),\cdots,\gnV{}^{(k)}(\gnsT)}
\]
are exactly positions of nodes in the last generation of the two subtrees $\subtree{\gnsT}{1},\subtree{\gnsT}{2}$.
In other words,
\begin{align*}
 &\bstP{k}(\gap{}{i}(\gnsT)>x)\\
=&\frac{1}{2}\bstP{k}(\rangeS{}(\gnsT)>x,\#\subtree{\gnsT}{1}=i,\#\subtree{\gnsT}{2}=k-i)\\
&+\frac{1}{2}\bstP{k}(\rangeS{}(\gnsT)>x,\#\subtree{\gnsT}{1}=k-i,\#\subtree{\gnsT}{2}=i)+o(\bstP{k}(\range{}(\gnsT)>x)),
\end{align*}
where the factors $\frac{1}{2}$ are to distinguish the symmetric cases $\gap{}{i}(\gnsT)>x$ and $\gap{}{k-i}(\gnsT)>x$.

Moreover, by \Cref{pp:st}, we have that
\begin{align*}
&\bstP{k}(\rangeS{}(\gnsT)>x,\#\subtree{\gnsT}{1}=i,\#\subtree{\gnsT}{2}=k-i)\\
=&\sum_{u\ge 1}\rangeR(2,u,x)
\bstP{k}(\gnH(\gnsT)=u,\#\subtree{\gnsT}{1}=i,\#\subtree{\gnsT}{2}=k-i)\\
=&(C_3+o(1))\sum_{u\ge 1}\rangeR(2,u,x)
\bstP{k}(\gnH(\gnsT)=u),
\end{align*}
and the conclusion follows from the proof of \Cref{lm:rangeS}.
\end{proof}
\begin{rmk}\label{rmk:phy}
As an example, consider the canonical case where the offspring distribution $\gwCh$ is the geometric distribution with parameter $\frac{1}{2}$, i.e. $\gwCh(k)=2^{-k-1}$, then one can explicitly show (cf. eg. \cite[Section 1.4]{bookan}) that its generating function satisfies
\[
f_n(s)=\sum_{i=0}^\infty\mathbb P(Z_n=i)s^i=1-\frac{1}{n+\frac{1}{1-s}},\]
and its transition probabilities are
\[P_n(1,k)=\frac{1}{k!}\frac{d^kf_n(s)}{ds^k}|_{s=0}=\frac{n^{k-1}}{(n+1)^{k+1}}.\]
Therefore,
\[
\pi_k=\lim_{n\rightarrow\infty}\frac{P_n(1,k)}{P_n(1,1)}=1,\, \forall k\in\mathbb N_+.
\]
Thus in this case, the constant 
\[C_1C_3=\frac{\gwf''(\gwq)}{\gwgamma^2\sigma^2}
\frac{\pi_i\pi_{k-i}}{\pi_k\sum_{i=1}^\infty \pi_i (\gwCh(0))^i}=\frac{\gwf''(\gwq)}{\gwgamma^2\sigma^2}
\frac{1}{\sum_{i=1}^\infty (\gwCh(0))^i}
\]
in \Cref{pp:gap} no longer depends on the choice of $i$ or $k$, as is showed in \cite{phycon2}.
\end{rmk}

Finally, we formally conclude that
\begin{proof}[Proof of \Cref{thm:brw}]
The conclusion follows from \eqref{eq:PGWtoPST} (with its counterpart for the gaps), \Cref{pp:rangeR} and \Cref{pp:gap}.
\end{proof}
\bibliography{1}
\end{document}